\documentclass[11pt]{amsart}

\usepackage{url}
\usepackage{color}
\usepackage{graphicx}
\usepackage{amssymb}
\usepackage[colorlinks=true]{hyperref}
\usepackage{rotating}
\usepackage[lofdepth,lotdepth]{subfig}
\newcommand{\rotxc}[1]{\begin{sideways}#1\end{sideways}}
\newcommand{\invert}[1]{\rotxc{\rotxc{#1}}}

\parskip=0.5ex plus0.16667ex minus0.08333ex

\newcommand{\0}{\mathbf{0}}
\renewcommand{\a}{\mathbf{a}}
\renewcommand{\b}{\mathbf{b}}

\newcommand{\C}{\mathcal{C}}

\newcommand{\FF}{\mathbb{F}}

\newcommand{\RR}{\mathbb{R}}
\newcommand{\QQ}{\mathbb{Q}}
\newcommand{\R}{\mathbb{R}}

\newcommand{\B}{\mathcal{B}}

\def\M{\mathcal{M}}
\def\O{\mathcal{O}}
\def\Le{\hbox{\invert{$\Gamma$}}}

%For rc-graphs, courtesy Nantel Bergeron, command names rewritten by EM
\font\co=lcircle10

%\def\Le{\hbox{\rotatedown{$\Gamma$}}}
%Pair of avoiding elbows looks kind of like a `j' and then an `r'
\def\jr{\rotatedown{\smash{\raise2pt\hbox{\co \rlap{\rlap{\char'005} \char'007}}
               \raise6pt\hbox{\rlap{\vrule height6.5pt}}
                \raise2pt\hbox{\rlap{\hskip4pt \vrule
          height0.4pt depth0pt
                width7.7pt}}}}}
\def\textcross{\ \smash{\lower4pt\hbox{\rlap{\hskip4.15pt\vrule height14pt}}
                \raise2.8pt\hbox{\rlap{\hskip-3pt \vrule height.4pt depth0pt
                                width14.7pt}}}\hskip12.7pt}

\def\textelbow{\ \hskip.1pt\smash{\raise2.75pt%
%               \hbox{\co \hskip 4.15pt\rlap{\rlap{\char'005} \char'007}
                \hbox{\co \hskip 4.15pt\rlap{\rlap{\char'004} \char'006}
                \lower6.8pt\rlap{\vrule height3.5pt}
                \raise3.6pt\rlap{\vrule height3.5pt}}
                \raise2.8pt\hbox{%
                  \rlap{\hskip-7.15pt \vrule height.4pt depth0pt
width3.5pt}%
                  \rlap{\hskip4.05pt \vrule height.4pt depth0pt
width3.5pt}}}
                \hskip8.7pt}

\DeclareMathOperator{\convex}{convex}
\DeclareMathOperator{\MacP}{MacP}
\DeclareMathOperator{\Gr}{Gr}
\DeclareMathOperator{\sign}{sign}

\newtheorem{theorem}{Theorem}[section]
\newtheorem{lemma}[theorem]{Lemma}
\newtheorem{proposition}[theorem]{Proposition}

\newtheorem{conjecture}[theorem]{Conjecture}
\theoremstyle{definition}
\newtheorem{definition}[theorem]{Definition}
\newtheorem{example*}[theorem]{Example}

\theoremstyle{remark}
\newtheorem{remark}[theorem]{Remark}

\DeclareRobustCommand{\qedify}[1]{%
  \ifmmode \quad\hbox{#1}
  \else
    \leavevmode\unskip\penalty9999 \hbox{}\nobreak\hfill
    \quad\hbox{#1}%
  \fi
}
\newenvironment{example}{\begin{example*}\pushQED{\qedify{$\diamondsuit$}}}{\popQED\end{example*}}

\makeatletter
\newtheorem*{rep@theorem}{\rep@title}
\newcommand{\newreptheorem}[2]{%
\newenvironment{rep#1}[1]{%
 \def\rep@title{{\bf #2 \ref{##1}}}%
 \begin{rep@theorem}}%
 {\end{rep@theorem}}}
\makeatother

\newreptheorem{theorem}{Theorem}

\begin{document}

\begin{abstract}
We prove da Silva's 1987 conjecture that any positively oriented matroid is a positroid; that is, it can be realized by a set of vectors in a real vector space.
It follows from this result and a result of the third author that 
the positive matroid Grassmannian (or \emph{positive MacPhersonian}) is homeomorphic to a 
closed ball.
\end{abstract}

\title{Positively oriented matroids are realizable}

\dedicatory{Dedicated to the memory of Michel Las Vergnas.}

\date{\today}
\thanks{
The first author was partially supported by the National Science Foundation CAREER Award DMS-0956178 and the SFSU-Colombia Combinatorics Initiative. 
The second author was supported by the EPSRC grant EP/I008071/1.  
The third author was partially supported by the National Science Foundation CAREER award
DMS-1049513.
}
\author{Federico Ardila}
\address{Mathematics Department, San Francisco State University, United States.}
\email{federico@sfsu.edu}

\author{Felipe Rinc\'on}
\address{Mathematics Institute, University of Warwick, United Kingdom.}
\email{e.f.rincon@warwick.ac.uk}

\author{Lauren Williams}
\address{Mathematics Department, University of California, Berkeley, United States.}
\email{williams@math.berkeley.edu}

\maketitle

%\setcounter{tocdepth}{1}
%\tableofcontents

\section{Introduction}
\label{sec:intro}

\emph{Matroid theory} was introduced in the 1930s as a combinatorial model that keeps track of, and abstracts, the dependence relations among a set of vectors. It has become an extremely powerful model in many other contexts, but its connections to linear algebra are still the subject of very interesting research today. Not every matroid arises from linear algebra, and one of the early hopes in the area was to discover the ``missing axiom" which characterizes the matroids that can be realized by a set of vectors. 
It is now believed that this is not a reasonable goal \cite{MNW, Vamos}, or in V\'amos's words, that ``the missing axiom of matroid theory is lost forever". 

While the realizability of a matroid over fields of characteristic zero is a very hard problem, the realizability over a finite field $\FF_q$ is more tractable. Geelen, Gerards, and Whittle recently announced a proof of Rota's 1970 conjecture that for any finite field $\FF_q$, there are only finitely many obstructions (``excluded minors") to being realizable over $\FF_q$. In other words, for realizability over a finite field there is indeed a finite list of ``missing axioms of matroid theory".

In a different but related direction, \emph{oriented matroid theory} was introduced in the 1970s as a model for real hyperplane arrangements; or equivalently, for the dependence relations among a set of real vectors together with their signs. Again, the problem of characterizing which oriented matroids actually come from real hyperplane arrangements is intractable. Even for orientations of uniform matroids, there is no finite set of excluded minors for realizability \cite{BS} \cite[Theorem 8.3.5]{OM}.

The problem of (oriented) matroid realizability over the field $\QQ$ of rational numbers is particularly hard. Sturmfels proved \cite{Stu} that the existence of an algorithm for deciding if any given (oriented) matroid is realizable over $\QQ$ is equivalent to the existence of an algorithm for deciding the solvability of arbitrary Diophantine equations within the field of rational numbers. It is also equivalent to the existence of an algorithm that decides if a given lattice is isomorphic to the face lattice of a convex polytope in rational Euclidean space. Despite much interest, all of these problems remain open.

\emph{Positively oriented matroids} were introduced by Ilda da Silva in 1987. They are oriented matroids for which all bases have a positive orientation. The motivating example is the uniform positively oriented matroid $\C^{n,r}$, which is realized by the vertices of the cyclic polytope $C^{n,r}$ \cite{Bland, Lv}. Da Silva studied the combinatorial properties of positively oriented matroids, and proposed the following conjecture, which is the main result of this paper.

\begin{conjecture} \label{conj} (da Silva, 1987 \cite{daS})
Every positively oriented matroid is realizable.
\end{conjecture}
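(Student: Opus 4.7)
The plan is to reduce Conjecture~\ref{conj} to the well-developed theory of positroids by showing that the underlying matroid of any positively oriented matroid is itself a positroid.

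Let $\M$ be a positively oriented matroid on $[n]$ of rank $r$ with normalized chirotope $\chi$, and let $M$ denote its underlying (unoriented) matroid. If $M$ can be shown to be a positroid, then Postnikov's theory of the totally nonnegative Grassmannian $\Gr^{\geq 0}_{r,n}$ produces a totally nonnegative real $r \times n$ matrix $A$ realizing $M$. The Pl\"ucker coordinates of $A$ are nonnegative, and strictly positive precisely on the bases of $M$, so the chirotope of $A$ agrees with $\chi$ and $A$ realizes $\M$. The entire problem therefore reduces to showing that $M$ is a positroid.

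To do so I would appeal to the characterization of positroids by their matroid polytopes: a connected matroid $M$ on $[n]$ is a positroid if and only if every facet of the matroid polytope $P(M)$ is cut out by an inequality of the form $\sum_{i \in S} x_i \leq \mathrm{rk}_M(S)$ with $S$ a cyclic interval of $[n]$; equivalently, every flacet of $M$ is a cyclic interval. The problem thus becomes: show that the flacets of the underlying matroid of a positively oriented matroid are cyclic intervals.

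The main obstacle is this combinatorial step. I would proceed by contrapositive: suppose $F$ is a flacet of $M$ that is not a cyclic interval, so there exist indices $a, b, c, d$ in cyclic order with $a, c \in F$ and $b, d \notin F$. The connectedness of $M|_F$ and $M/F$ should furnish an $(r{-}2)$-subset $I$ disjoint from $\{a,b,c,d\}$ such that a carefully chosen pattern of the six $r$-subsets $I \cup \{x,y\}$, $\{x,y\} \subset \{a,b,c,d\}$, are bases while others are not. Applying the three-term Grassmann-Pl\"ucker sign axiom to $(I;a,b,c,d)$, and exploiting the nonnegativity of $\chi$, one should force an incompatible sign pattern. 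Since positive orientation is preserved under single-element deletion and (after cyclic reindexing) contraction, an induction on $n$ can reduce to minimal counterexamples where this sign analysis is tractable.

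Once $M$ is established as a positroid, Postnikov's parameterization of $\Gr^{\geq 0}_{r,n}$ realizes $\M$, proving Conjecture~\ref{conj}. The homeomorphism of the positive MacPhersonian with a closed ball then follows from the cited result of the third author combined with the cell decomposition of $\Gr^{\geq 0}_{r,n}$.
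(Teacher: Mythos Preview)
Your outline is essentially the paper's proof: reduce to showing the underlying matroid is a positroid, invoke the matroid-polytope characterization that every flacet must be a cyclic interval, pick $a<b<c<d$ cyclically with $a,c$ in a non-interval flacet $F$ and $b,d\notin F$, use connectedness of $M|F$ and $M/F$ to build an $(r{-}2)$-set $I=A\cup B$ so that the four sets $I\cup\{a,b\},I\cup\{a,d\},I\cup\{b,c\},I\cup\{c,d\}$ are bases while $I\cup\{a,c\}$ is not, and then derive a sign contradiction from axiom (B$2'''$). The one place your plan diverges is the reduction to the connected case: the paper does not induct on $n$ via deletion/contraction, but instead shows (using da Silva's circuit/cocircuit non-crossing criterion) that the connected components of a positively oriented matroid form a non-crossing partition, and then applies the fact that a non-crossing direct sum of positroids is a positroid.
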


More recently,  Postnikov \cite{postnikov} introduced \emph{positroids} in his study of the \emph{totally nonnegative part of the Grassmannian}. They are the (unoriented) matroids that can be represented by a real matrix in which all maximal minors are nonnegative. He unveiled their elegant combinatorial structure, and showed they are in bijection with several interesting classes of combinatorial objects, including Grassmann necklaces, decorated permutations, $\Le$-diagrams,  and equivalence classes of plabic graphs. They have recently been found to have very interesting connections with cluster algebras \cite{Scott} and quantum field theory \cite{AH}.

Every positroid gives rise to a positively oriented matroid, and da Silva's Conjecture \ref{conj} is the converse statement. This is our main theorem.

\begin{reptheorem}{thm:main} 
Every positively oriented matroid is a positroid, and is therefore realizable over $\QQ$.
\end{reptheorem}

There is a natural partial order on oriented matroids called
\emph{specialization}.
In \cite{MacPherson}, motivated by his theory of 
\emph{combinatorial differential manifolds}, 
MacPherson introduced the 
\emph{matroid Grassmannian} (also called the \emph{MacPhersonian})
$\MacP(d,n)$, which is the poset of rank $d$ oriented matroids on $[n]$
ordered by specialization.  He showed that 
$\MacP(d,n)$ plays the same role for
matroid bundles as the ordinary Grassmannian plays for vector bundles,
and pointed out that the geometric realization of the order complex
$\|\MacP(d,n)\|$ of $\MacP(d,n)$ is 
homeomorphic to the real Grassmannian 
$\Gr(d,n)$ if $d$ equals 
$1, 2, n-2$, or $n-1$.  
``Otherwise, the topology of the matroid Grassmannian is mostly a mystery."

Since MacPherson's work, some progress on this question has been made,
most notably by Anderson \cite{Anderson}, who obtained results
on homotopy groups of the matroid Grassmannian, and by
Anderson and Davis \cite{AD}, who constructed maps between the
real Grassmannian and the matroid Grassmannian -- showing that philosophically,
there is a splitting of the map from topology to combinatorics --
and thereby
gained some understanding of the mod $2$ cohomology
of the matroid Grassmannian.  However, many open questions remain.

We define the \emph{positive matroid Grassmannian} or 
\emph{positive MacPhersonian} $\MacP^+(d,n)$
to be the poset of rank $d$ positively oriented matroids on $[n]$,
ordered by specialization.  By Theorem \ref{thm:main}, each 
positively oriented matroid can be realized by an element of 
the \emph{positive Grassmannian} $\Gr^+(d,n)$.  Combining this fact
with results of the third author \cite{Shelling}, we obtain the 
following result.

\begin{theorem}
The positive matroid Grassmannian
$\|\MacP^+(d,n)\|$ is {ho\-me\-o\-mor\-phic} to a closed ball.  
\end{theorem}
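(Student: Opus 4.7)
The plan is to identify $\MacP^+(d,n)$ with the poset of positroid cells of the totally nonnegative Grassmannian $\Gr^+(d,n)$, and then invoke the shellability and regularity results of the third author from \cite{Shelling}. Postnikov's positroid stratification decomposes $\Gr^+(d,n)$ into cells indexed by positroids of rank $d$ on $[n]$; each such cell realizes a single positively oriented matroid, namely its underlying positroid equipped with its canonical orientation. This yields an injection from the closure poset of positroid cells into $\MacP^+(d,n)$, which by Theorem \ref{thm:main} is a bijection at the level of elements.

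Next, I would verify that this bijection is an isomorphism of posets: the closure order on positroid cells matches the specialization order on positively oriented matroids. Concretely, a point in the boundary of the cell $\Pi_P^{>0}$ realizes a positively oriented matroid obtained from $P$ by sending some maximal minors to zero, which on the oriented-matroid side is precisely the specialization relation; the converse inclusion uses Theorem \ref{thm:main} to guarantee that every specialization of $P$ is in fact realized in $\Gr^+(d,n)$. This is the step demanding the most care, since the two partial orders are defined from quite different points of view.

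Once the poset isomorphism is established, the shellability and regularity results of \cite{Shelling} for the cell decomposition of $\Gr^+(d,n)$ say that the order complex of its closure poset is a regular CW complex homeomorphic to a closed ball. Combining this with the poset isomorphism above yields $\|\MacP^+(d,n)\|$ homeomorphic to a closed ball. The principal obstacle in this plan is the middle step, matching the two partial orders in a fully rigorous way; the rest then follows formally from Theorem \ref{thm:main} and \cite{Shelling}.
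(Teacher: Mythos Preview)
Your proposal is correct and follows essentially the same route as the paper: establish a poset isomorphism $\MacP^+(d,n)\cong Q(d,n)$ via Theorem~\ref{thm:main} and Postnikov's description of the cell closure order, then invoke the results of \cite{Shelling}. The only remark is that the step you flag as the ``principal obstacle'' is in fact immediate: for positively oriented matroids all nonzero chirotope values are $+1$, so specialization $\M\leadsto\M'$ reduces to containment of bases $\B'\subseteq\B$, which by Postnikov's Theorem~\ref{posetclosures} is exactly the closure order on cells.
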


The structure of this paper is as follows. In Sections \ref{sec:prelim} and \ref{sec:positroids} we recall some basic definitions and facts about matroids and positroids, respectively. In Section \ref{sec:POMs} we introduce positively oriented matroids, and prove some preliminary results about them. In Section \ref{sec:theorem} we prove da Silva's conjecture that all positively oriented matroids are realizable. 
Finally, in Section \ref{sec:MacPhersonian}, we introduce the 
positive MacPhersonian, and show that it is homeomorphic to a closed ball.

\section{Matroids}
\label{sec:prelim}

A matroid is a combinatorial object that unifies several notions of independence. Among the many equivalent ways of defining a matroid we will adopt the point of view of bases, which is one of the most convenient for the study of positroids and matroid polytopes. We refer the reader to \cite{Oxley} for a more in-depth introduction to matroid theory.

\begin{definition}
A \emph{matroid} $M$ is a pair $(E, \B)$ consisting of a finite set $E$ and a nonempty collection of subsets $\B=\B(M)$ of $E$, called the \emph{bases} of $M$, which satisfy the \emph{basis exchange axiom}: 
\begin{itemize}
\item If $B_1, B_2 \in \B$ and $b_1 \in B_1 - B_2$, then there exists $b_2 \in B_2 - B_1$ such that $(B_1 -  b_1) \cup b_2 \in \B$.
\end{itemize}
\end{definition}

The set $E$ is called the \emph{ground set} of $M$; we also say that $M$ is a matroid on $E$. 
A subset $F \subseteq E$ is called \emph{independent} if it is contained in some basis. 
The maximal independent sets contained in a given set $A \subseteq E$ are called the \emph{bases of $A$}. 
They all have the same size, which is called the \emph{rank} $r_M(A)=r(A)$ of $A$. 
In particular, all the bases of $M$ have the same size, 
called the rank $r(M)$ of $M$.  A subset of $E$ that is not independent
is called \emph{dependent}. A \emph{circuit} is a minimal dependent
subset of $E$ -- that is, a dependent set whose proper subsets
are all independent.

\begin{example}
Let $A$ be a $d \times n$ matrix of rank $d$ with entries in a field $K$, and denote its columns by ${\bf a}_1, {\bf a}_2, \dotsc, {\bf a}_n \in K^d$. The subsets $B \subseteq [n]$ for which the columns $\{{\bf a}_i \mid i \in B \}$ form a linear basis for $K^d$ are the bases of a matroid $M(A)$ on the set $[n]$. Matroids arising in this way are called \emph{realizable}, and motivate much of the theory of matroids.
\end{example}

There are several natural operations on matroids.

\begin{definition}\label{def:sum}
Let $M$ be a matroid on $E$ and $N$ a matroid on $F$. The \emph{direct sum}
of matroids $M$ and $N$ is the matroid $M \oplus N$
whose underlying set is
the disjoint union of $E$ and $F$, and whose bases are the disjoint 
unions of a basis of $M$ with a basis of $N$.
\end{definition}

\begin{definition}
Given a matroid $M=(E,\B)$, the \emph{orthogonal} or \emph{dual matroid} $M^*=(E,\B^*)$ is the 
matroid on $E$ defined by 
$\B^* := \{E - B \mid B\in \B\}$.
\end{definition}

A \emph{cocircuit} of $M$ is a circuit of the dual matroid $M^*$.

\begin{definition}
Given a matroid $M=(E,\B)$ and a subset $S \subseteq E$, the \emph{restriction}
of $M$ to $S$, written $M|S$, is the matroid on the ground set $S$ whose independent sets 
are all independent sets of $M$ which are contained in $S$. Equivalently,
the set of bases of $M|S$ is
\[
\B(M|S) = \{B \cap S \, \mid \, B \in \B \text{ and } |B \cap S| \text{ is maximal among all }B \in \B \}. 
\]
\end{definition}

The dual operation of restriction is contraction.

\begin{definition}
Given a matroid $M=(E,\B)$ and a subset $T \subseteq E$, the \emph{contraction}
of $M$ by $T$, written $M/T$, is the matroid on the ground set $E - T$ whose bases
are the following:
\[
\B(M/T) = \{B - T \, \mid \, B \in \B \text{ and } |B \cap T| \text{ is maximal 
among all }B \in \B \}. 
\]
\end{definition}

\begin{proposition}\cite[Chapter 3.1, Exercise 1]{Oxley}\label{prop:drc}
If $M$ is a matroid on $E$ and $T \subseteq E$, then 
\[
(M/T)^* = M^*|(E - T).
\]
\end{proposition}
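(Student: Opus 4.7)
The plan is to verify that the two matroids have the same collection of bases, by unraveling both definitions in parallel. Both matroids live on the ground set $E - T$, so it suffices to check $\B((M/T)^*) = \B(M^*|(E-T))$ as families of subsets of $E - T$.

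First I would expand $\B((M/T)^*)$. By definition of the dual, its bases are the complements in $E - T$ of bases of $M/T$, and by the definition of contraction these are sets of the form
\[
(E - T) - (B - T),
\]
where $B$ ranges over bases of $M$ with $|B \cap T|$ maximal.

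Next I would expand $\B(M^*|(E-T))$. A basis here has the form $B^* \cap (E - T)$ with $|B^* \cap (E-T)|$ maximal among bases $B^*$ of $M^*$. Writing $B^* = E - B$ with $B \in \B(M)$ gives $B^* \cap (E - T) = E - (B \cup T)$ and $|B^* \cap (E - T)| = |E| - |B \cup T| = |E| - |T| - |B \setminus T| = |E| - |T| - |B| + |B \cap T|$. Since $|E|$, $|T|$, and $|B|$ are constants (all bases of $M$ have the same size), maximality of $|B^* \cap (E-T)|$ is equivalent to maximality of $|B \cap T|$. Thus the indexing set of bases matches on both sides.

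Finally, I would check that for each such $B$, the two prescriptions produce the same subset: an element $x \in E - T$ lies in $(E - T) - (B - T)$ iff $x \notin B - T$ iff $x \notin B$ (since $x \notin T$ already), and similarly $x \in E - (B \cup T)$ iff $x \notin B$ and $x \notin T$. So the two descriptions coincide and the equality of basis families follows. There is no serious obstacle here; the only mild care is in the bookkeeping that links maximality of $|B \cap T|$ on the contraction side with maximality of $|B^* \cap (E - T)|$ on the restriction side, which is just the cardinality computation above.
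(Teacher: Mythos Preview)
Your proof is correct: the direct comparison of basis collections via the definitions of dual, contraction, and restriction goes through exactly as you describe, and the cardinality bookkeeping linking maximality of $|B\cap T|$ to maximality of $|B^*\cap(E-T)|$ is the only point requiring care. The paper itself does not prove this proposition; it merely cites it as \cite[Chapter~3.1, Exercise~1]{Oxley}, so there is no argument to compare against, but your approach is the standard one.
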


The following geometric representation of a matroid will be
useful in our study of positroids.

\begin{definition} 
Given a matroid $M=([n],\B)$, the \emph{(basis) matroid polytope} $\Gamma_M$ of $M$ is the convex hull of the indicator vectors of the bases of~$M$:
\[
\Gamma_M := \convex\{e_B \mid B \in \B\} \subset \RR^n,
\]
where $e_B := \sum_{i \in B} e_i$, and $\{e_1, \dotsc, e_n\}$ is the standard basis of $\RR^n$.
\end{definition}

%When we speak of ``a matroid polytope", we refer to the polytope of a specific matroid in its specific position in $\RR^n$. 

\begin{definition}
A matroid which cannot be written as the direct sum of two nonempty
matroids is called \emph{connected}.  Any matroid $M$ can be written uniquely as a direct sum of connected
matroids, called its \emph{connected components}; let $c(M)$ denote the number of 
{connected components} of $M$.
\end{definition}

Taking duals distributes among direct sums, so a matroid $M$ is connected if and only if its dual matroid $M^*$ is connected.

\begin{proposition} \cite{Oxley}.
\label{prop:equiv}
Let $M$ be a matroid on $E$.  For two elements $a, b \in E$, we set
$a \sim b$ whenever there are bases $B_1, B_2$ of $M$ such that
$B_2 = (B_1 - a) \cup b$. Equivalently, $a \sim b$ if and only if there
is a circuit $C$ of $M$ containing both $a$ and $b$. 
The relation $\sim$ is an equivalence
relation, and the equivalence classes are precisely the connected components of $M$.
\end{proposition}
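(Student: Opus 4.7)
The plan is to prove the three assertions of the proposition — the equivalence of the two characterizations of $\sim$, that $\sim$ is an equivalence relation, and that its classes coincide with the connected components of $M$ — in turn.

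\textbf{Equivalence of the two characterizations.} The natural tool is the fundamental circuit. If $B_1, B_2$ are bases with $B_2 = (B_1 - a) \cup b$, then $B_1 \cup b$ is dependent and contains a unique circuit $C$; this $C$ necessarily contains $b$, and must also contain $a$, since otherwise $C \subseteq B_2$ would contradict independence of $B_2$. Conversely, given a circuit $C$ containing both $a$ and $b$, extend the independent set $C - a$ to a basis $B_2$ of $M$. Then $B_2 \cup a$ is dependent, and the unique circuit it contains must be $C$ itself (as $C \subseteq B_2 \cup a$), so $B_1 := (B_2 \cup a) - b$ is a basis with $B_2 = (B_1 - a) \cup b$.

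\textbf{Equivalence relation.} Reflexivity and symmetry are immediate from either characterization (taking $a \sim a$ by convention, justified by the circuit $\{a\}$ when $a$ is a loop). Transitivity is the principal obstacle: given circuits $C_1$ containing $a, b$ and $C_2$ containing $b, c$, I would apply the strong circuit elimination axiom at $b \in C_1 \cap C_2$ with distinguished element $a$, obtaining a circuit $C_3 \subseteq (C_1 \cup C_2) - b$ with $a \in C_3$. If $c \in C_3$, we are done; otherwise $C_3 \cap C_2$ is nonempty (for if $C_3 \subseteq C_1 - b$, then $C_3$ would be a dependent subset of the independent set $C_1 - b$, a contradiction), and a second application of circuit elimination between $C_3$ and $C_2$, preserving $a$, produces a circuit containing both $a$ and $c$. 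This iterative argument is the main technical step.

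\textbf{Classes are connected components.} For one direction, if $M = M_1 \oplus M_2$ is a nontrivial decomposition, every basis of $M$ splits as a disjoint union of bases of $M_1$ and $M_2$, and every circuit lies entirely in one summand; hence elements from different summands are never $\sim$-related, so each $\sim$-class is contained in a single connected component. For the converse, let $E_1, \dotsc, E_k$ be the $\sim$-classes. It suffices to show that for every basis $B$ of $M$ and every $i$, the set $B \cap E_i$ is a basis of the restriction $M|E_i$ — this will yield the rank identity $r(A) = \sum_i r(A \cap E_i)$ for every $A \subseteq E$ and hence the decomposition $M = (M|E_1) \oplus \cdots \oplus (M|E_k)$. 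Independence of $B \cap E_i$ is clear; for maximality, I would observe that for any $e \in E_i - B$ the fundamental circuit of $e$ with respect to $B$ lies entirely inside $(B \cap E_i) \cup e$, since every element of that circuit other than $e$ is $\sim$-related to $e$ and therefore lies in $E_i$. This forces $(B \cap E_i) \cup e$ to be dependent in $M|E_i$, as required, and completes the identification of the $\sim$-classes with the connected components of $M$.
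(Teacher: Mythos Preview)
The paper does not actually prove this proposition: it is quoted from Oxley's textbook with a citation and used as a black box throughout. So there is no argument in the paper to compare yours against.

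Your outline is the standard one and is essentially correct, with one soft spot. The equivalence of the basis-exchange and circuit descriptions via fundamental circuits is fine, and your identification of the $\sim$-classes with the connected components (using that the fundamental circuit of $e\in E_i$ with respect to $B$ lies entirely in $E_i$) is the usual clean argument.

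The gap is in transitivity. The sentence ``a second application of circuit elimination between $C_3$ and $C_2$, preserving $a$, produces a circuit containing both $a$ and $c$'' is not correct as written: strong circuit elimination lets you keep \emph{one} prescribed element, so eliminating some $e\in C_3\cap C_2$ while retaining $a$ gives a circuit $C_4$ with $a\in C_4\subseteq (C_3\cup C_2)-e$, but nothing forces $c\in C_4$. You acknowledge that an iteration is needed, but an iteration requires a decreasing invariant, and none is supplied. The standard fix (this is how Oxley does it) is to assume a counterexample exists and choose circuits $C_1\ni a,b$ and $C_2\ni b,c$ with $|C_1\cup C_2|$ minimal; after one elimination you obtain a strictly smaller counterexample by replacing $b$ with a suitable element of $(C_1\cup C_2)-\{a,b,c\}$, and some bookkeeping is required to verify strict decrease. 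Alternatively, one can restrict to $X=C_1\cup C_2$, observe that $X-\{a,c\}$ still spans $X$ (since $c\notin C_1$ and $a\notin C_2$), take a basis $B\subseteq X-\{a,c\}$ extending $C_1-a$, so that $C(a,B)=C_1$, and then argue inside $B\cup\{a,c\}$. Either route genuinely needs the extra step you elided.
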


The following lemma is well-known and easy to check.

\begin{lemma}\label{lem:dimpoly}
Let $M$ be a matroid on the ground set $[n]$.
The dimension of the matroid polytope $\Gamma_M$ equals 
$n-c(M)$.
\end{lemma}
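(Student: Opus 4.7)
The plan is to prove both inequalities. For the upper bound, write $M = M_1 \oplus \cdots \oplus M_k$ with $k = c(M)$, where $M_i$ is a connected matroid on ground set $E_i \subseteq [n]$ of rank $r_i$. By Definition \ref{def:sum}, every basis $B$ of $M$ satisfies $|B \cap E_i| = r_i$ for each $i$, so every vertex $e_B$ of $\Gamma_M$ lies on the affine hyperplane $H_i := \{x \in \RR^n : \sum_{j \in E_i} x_j = r_i\}$. Hence $\Gamma_M \subseteq H_1 \cap \cdots \cap H_k$. Because the $H_i$ involve disjoint sets of coordinates, they impose $k$ linearly independent affine constraints, so their intersection has dimension $n-k = n - c(M)$. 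This gives $\dim \Gamma_M \leq n - c(M)$.

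For the lower bound, observe that the bases of $M_1 \oplus \cdots \oplus M_k$ are exactly the disjoint unions $B_1 \sqcup \cdots \sqcup B_k$ with $B_i \in \B(M_i)$, and the indicator vector decomposes accordingly, so $\Gamma_M$ is affinely isomorphic to the Cartesian product $\Gamma_{M_1} \times \cdots \times \Gamma_{M_k}$. Therefore it suffices to show that a connected matroid $N$ on a ground set $E$ with $|E| = m$ has $\dim \Gamma_N \geq m - 1$; summing gives $\dim \Gamma_M \geq \sum_i (|E_i| - 1) = n - c(M)$.

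So fix a connected matroid $N$ on $E$ with $|E| = m$, and pick any basis $B_0$. The affine span of $\Gamma_N$ is $e_{B_0} + V$ where $V = \mathrm{span}\{e_B - e_{B_0} : B \in \B(N)\}$; I want to show $\dim V \geq m - 1$. By Proposition \ref{prop:equiv}, for any $a, b \in E$ there exist bases $B_1, B_2$ of $N$ with $B_2 = (B_1 - a) \cup b$, so $e_b - e_a = e_{B_2} - e_{B_1} = (e_{B_2} - e_{B_0}) - (e_{B_1} - e_{B_0}) \in V$. Thus $V$ contains all differences $e_b - e_a$, which already span the $(m-1)$-dimensional hyperplane $\{x : \sum_{i \in E} x_i = 0\}$. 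Hence $\dim V \geq m - 1$, completing the proof.

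The main (minor) obstacle is simply ensuring that the direct-sum decomposition $\Gamma_M \cong \Gamma_{M_1} \times \cdots \times \Gamma_{M_k}$ is correctly invoked so the reduction to the connected case is clean; once that is in place, Proposition \ref{prop:equiv} does all the real work in the connected case by producing enough exchange edges of $\Gamma_N$ to span the ambient hyperplane.
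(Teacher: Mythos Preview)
Your proof is correct. The paper itself does not supply a proof of this lemma, merely noting that it is ``well-known and easy to check,'' so there is nothing to compare against directly. Your argument is the standard one: the direct-sum decomposition gives both the product structure $\Gamma_M \cong \prod_i \Gamma_{M_i}$ and the $c(M)$ independent affine constraints, reducing to the connected case, where Proposition~\ref{prop:equiv} furnishes the edge vectors $e_b - e_a$ spanning the full hyperplane. One tiny remark: you may want to note that the connected case with $|E|=1$ (a single loop or coloop) is trivial, since then $\Gamma_N$ is a point and $m-1=0$; for $|E|\geq 2$ connectedness rules out loops and coloops, so Proposition~\ref{prop:equiv} applies to every pair of distinct elements as you use it.
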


The following result is a restatement of the greedy algorithm for matroids.

\begin{proposition}\label{prop:face}\cite[Exercise 1.26]{Coxetermatroids}, \cite[Prop. 2]{AK}
Let $M$ be a matroid on $[n]$.  
Any face of the matroid polytope
$\Gamma_M$ is itself a matroid polytope. 
More specifically, for $w: \RR^n \to \R$ let $w_i = w(e_i)$; by linearity, these values determine $w$. Now consider the flag of sets 
\[
\emptyset = A_0 \subsetneq A_1 \subsetneq \cdots \subsetneq A_k = [n] 
\]
such that $w_a=w_b$ for $a,b \in A_i - A_{i-1}$, 
and $w_a > w_b$ for $a \in A_i-A_{i-1}$ and $b \in A_{i+1}-A_i$.
Then the face of $\Gamma_M$ maximizing the linear functional $w$ is the matroid polytope of the matroid 
\[
\bigoplus_{i=1}^{k} (M|A_i)/{A_{i-1}}.
\]
\end{proposition}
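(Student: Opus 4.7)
The plan is to reduce the statement to the greedy algorithm for matroids by rewriting the linear functional $w$ on indicator vectors $e_B$ in terms of the level-set filtration $A_0 \subsetneq A_1 \subsetneq \cdots \subsetneq A_k$. Let $c_j$ denote the common value of $w$ on $A_j - A_{j-1}$, so that $c_1 > c_2 > \cdots > c_k$. For any basis $B$ of $M$ we have
\[
w\cdot e_B \;=\; \sum_{j=1}^{k} c_j\,|B\cap(A_j-A_{j-1})|,
\]
and using $|B\cap A_k|=r(M)$ together with summation by parts we can rewrite this as
\[
w\cdot e_B \;=\; c_k\, r(M) + \sum_{j=1}^{k-1}(c_j - c_{j+1})\,|B\cap A_j|,
\]
where each coefficient $c_j - c_{j+1}$ is strictly positive.

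First I would observe that, since $r(M)$ is constant and the coefficients $c_j-c_{j+1}$ are positive, a basis $B$ maximizes $w\cdot e_B$ if and only if it simultaneously maximizes $|B\cap A_j|$ for every $j$. By standard matroid theory, the maximum possible value of $|B\cap A_j|$ over bases $B$ of $M$ is $r_M(A_j)$, and a simultaneous maximum over all $j$ does exist: starting from any basis of $M|A_1$, one can extend it (using basis exchange) to a basis of $M|A_2$, then to $M|A_3$, and so on, eventually to a basis of $M$. Thus the bases maximizing $w$ are exactly those $B$ for which $B\cap A_j$ is a basis of $M|A_j$ for every $j=1,\dots,k$.

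Next I would translate this characterization into the language of contractions. The condition that $B\cap A_j$ is a basis of $M|A_j$ for all $j$ is equivalent to saying that, for each $j$, $B\cap(A_j-A_{j-1})$ is a basis of the contraction $(M|A_j)/A_{j-1}$: indeed, $|B\cap A_{j-1}|=r_M(A_{j-1})$ forces $B\cap A_{j-1}$ to already be a basis of $M|A_{j-1}$, and then extending it to a basis of $M|A_j$ is precisely the same as choosing a basis of $(M|A_j)/A_{j-1}$ on the ground set $A_j-A_{j-1}$. By Definition~\ref{def:sum}, the disjoint unions of such bases across $j=1,\dots,k$ are exactly the bases of the direct sum $\bigoplus_{j=1}^{k}(M|A_j)/A_{j-1}$.

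Combining the two steps, the vertices of the face of $\Gamma_M$ maximizing $w$ are the indicator vectors $e_B$ where $B$ ranges over the bases of $\bigoplus_{j=1}^{k}(M|A_j)/A_{j-1}$, so the face is precisely the matroid polytope of this direct sum. The only subtle point — and really the only place where anything matroidal is used — is the existence of a simultaneous maximizer of the quantities $|B\cap A_j|$; this follows from iterated application of the basis exchange axiom along the chain $A_1\subset A_2\subset\cdots\subset A_k$, and is the standard ``greedy'' fact cited in \cite{Coxetermatroids, AK}.
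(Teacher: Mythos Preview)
Your argument is correct and is precisely the standard greedy-algorithm justification that the paper alludes to. Note that the paper does not actually give a proof of Proposition~\ref{prop:face}: it is stated there as a known result, with citations to \cite{Coxetermatroids} and \cite{AK}, and the preceding sentence in the paper (``The following result is a restatement of the greedy algorithm for matroids'') makes clear that exactly the argument you wrote is intended.

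One small point worth making explicit for completeness: in your final step you assert that the vertices of the $w$-maximizing face are exactly the indicator vectors of bases of $\bigoplus_{j}(M|A_j)/A_{j-1}$. You have shown that every $w$-maximizing basis of $M$ is a basis of the direct sum; the converse---that every basis of the direct sum is in fact a basis of $M$ (and hence a $w$-maximizer)---is implicit in your greedy construction but not stated separately. It follows by the same induction: if $B_j$ is a basis of $(M|A_j)/A_{j-1}$ for each $j$, then $B_1\cup\cdots\cup B_j$ is a basis of $M|A_j$ for all $j$, and in particular $B_1\cup\cdots\cup B_k$ is a basis of $M$. With that line added, the proof is complete.
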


\section{Positroids}
\label{sec:positroids}

We now introduce  a special class of realizable matroids 
introduced by Postnikov in \cite{postnikov}. 
We also collect several foundational results on positroids, which come from
\cite{oh, postnikov, ARW}.

\begin{definition}\label{def:positroid}
Suppose $A$ is a $d \times n$ matrix of rank $d$ with real entries such that all its maximal minors are nonnegative. Such a matrix $A$ is called \emph{totally nonnegative}, 
and the realizable matroid $M(A)$ associated to it is called a \emph{positroid}.
In fact, it follows from the work of Postnikov that any positroid can be realized 
by a totally nonnegative matrix with entries in $\QQ$ \cite[Theorem 4.12]{postnikov}.
\end{definition}

\begin{remark}
We will often identify the ground set of a positroid with the set $[n]$, but more 
generally, the ground set of a positroid may be any finite set $E=\{e_1,\dots,e_n\}$,
endowed with a specified total order 
$e_1<\dots<e_n$. Note that the fact that a given matroid is a positroid is strongly dependent
on the total order of its ground set; in particular, being a positroid is not invariant
under matroid isomorphism.
\end{remark}

\begin{example}
To visualize positroids geometrically, it is instructive to analyze the cases $d=2,3$. Some of these examples will be well-known to the experts; 
for example, part of this discussion also appears in \cite{AH}. 
Let the columns of $A$ be $\a_1, \ldots, \a_n \in \R^d$. 

\noindent {\it Case $d=2$}: Since $\det(\a_i, \a_j)$ is the signed area of the parallelogram generated by $\a_i$ and $\a_j$, we have that $0^\circ \leq \angle(\a_i,\a_j) \leq 180^\circ$ for $i<j$. Therefore the vectors $\a_1, \a_2, \ldots, \a_n$ appear in counterclockwise order in a half-plane, as shown in  Figure \ref{fig:2dim}.

\begin{figure}[ht]
\begin{center}
\includegraphics[height=3.4cm]{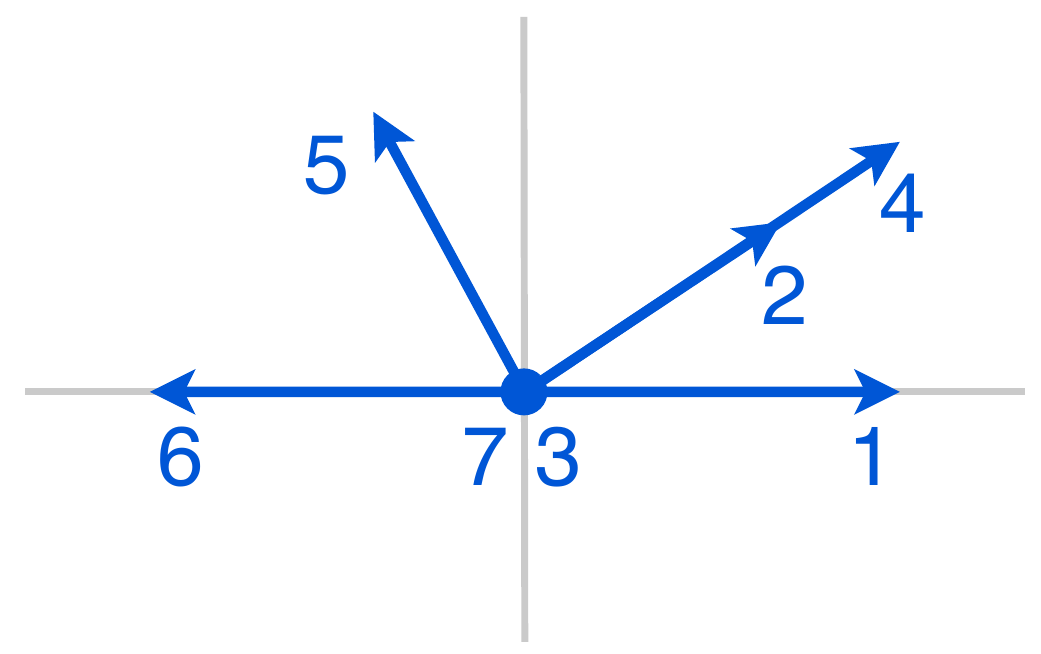}
 \caption{A realization of a positroid of rank $2$.}
 \label{fig:2dim}
\end{center}
\end{figure}

\medskip

\noindent {\it Case $d=3$}: Again we claim that $\a_1, \ldots, \a_n$ are contained in a half-space. If this were not the case, then the origin would be 
inside a triangular pyramid with affinely independent vertices 
$\a_{i_1}, \a_{i_2}, \a_{i_3}, \a_{i_4}$ for $i_1<\cdots<i_4$.
This would give 
$\lambda_1 \a_{i_1}+
\cdots +
\lambda_4 \a_{i_4}=\0$
for some $\lambda_1, \ldots, \lambda_4 > 0$. Then
\begin{eqnarray*}
0 &=& \det(\a_{i_1}, \a_{i_2},  \0) 
= \sum_{m=1}^4 \lambda_m \cdot \det(\a_{i_1},\a_{i_2}, \a_{i_m}) \\
&=&
\lambda_3 \cdot \det(\a_{i_1},\a_{i_2}, \a_{i_3}) +
\lambda_4 \cdot \det(\a_{i_1},\a_{i_2}, \a_{i_4}) > 0,
\end{eqnarray*}
a contradiction. 

There is no significant loss in assuming that our positroid contains no loops. Now there are two cases:

\begin{figure}[ht]
\begin{center}
\includegraphics[height=3.2cm]{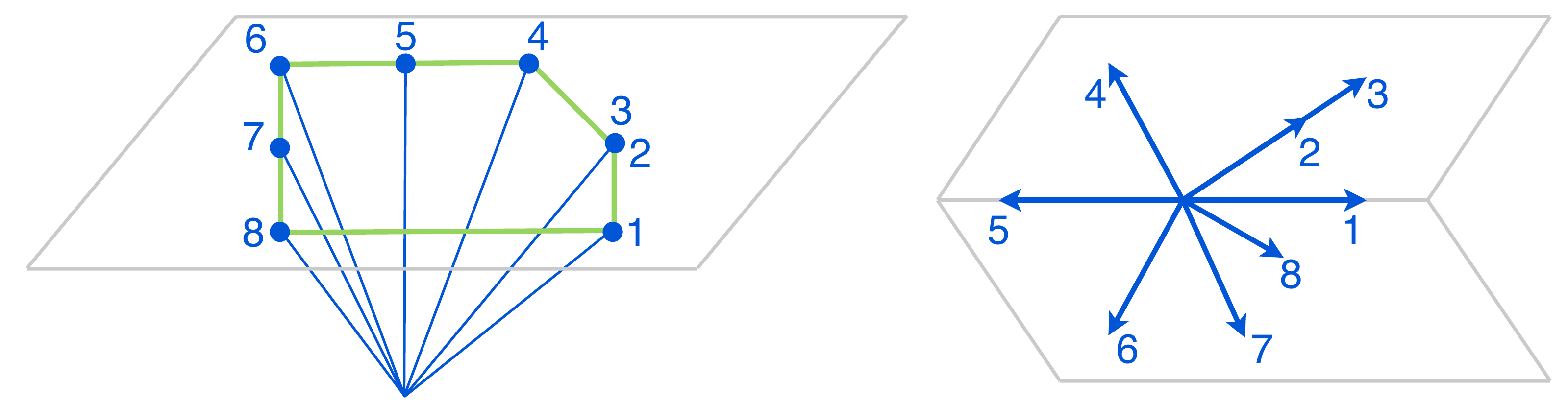}
 \caption{The two kinds of loop-free positroids of rank $3$.}
 \label{fig:3dim}
\end{center}
\end{figure}

\medskip

\noindent (a) The vectors $\a_1, \ldots, \a_n$ are in an \textbf{open} half-space.

After a suitable linear transformation and rescaling of the individual vectors, we may assume that $\a_i = [1, \b_i]^T$ for some row vector $\b_i \in \R^2$. Now $\det(\a_i,\a_j,\a_k)$ is the signed area of the triangle with vertices $\b_i, \b_j, \b_k$, so $\b_1, \ldots, \b_n$ must be the vertices (and possibly other points on the boundary) of a convex polygon, listed in counterclockwise order as shown in the left panel of Figure \ref{fig:3dim}.

\noindent (b) The vector $\0$ is in the convex hull of $\a_1, \ldots, \a_n$.

First assume that $\0 = \lambda_i \a_i + \lambda_j \a_j + \lambda_k \a_k$ where  $\lambda_i, \lambda_j, \lambda_k>0$ and $\a_i,\a_j,\a_k$ are affinely independent. Let $\a_l$ be one of the given vectors which is not on their plane. By \cite[Lemma 3.3]{ARW}, after possibly relabeling $i, j, k$, we may assume that $i<j<k<l$. 
This gives the following contradiction:
\[
0 = \det(\0,\a_k, \a_l) = \lambda_i \det(\a_i,\a_k, \a_l) + \lambda_j \det(\a_j,\a_k, \a_l)>0.
\]
Therefore $\0 = \lambda_i \a_i + \lambda_j \a_j$ for $\lambda_i, \lambda_j > 0$. If $\textrm{rank}(\{\a_i, \a_{i+1}, \ldots, \a_j\}) =3$, we would be able to find $i<r<s<j$ with
\[
0 = \det(\0, \a_r, \a_s) = \lambda_i \det(\a_i, \a_r, \a_s) + \lambda_j \det(\a_j, \a_r, \a_s) > 0.
\]
Thus $\textrm{rank}(\{\a_i, \a_{i+1}, \ldots, \a_j\})  \leq 2$ and similarly $\textrm{rank}(\{\a_j, \a_{j+1}, \ldots, \a_i\})  \leq 2$. Since our collection has rank $3$, these sets must both have rank exactly $2$. Hence our positroid is obtained by gluing the rank 2 positroids of $\a_i, \a_{i+1} \ldots, \a_j$ and $\a_j, \a_{j+1}, \ldots, \a_i$ along the line containing $\a_i$ and $\a_j$, as shown in the right panel of Figure \ref{fig:3dim}. One easily checks that this is a positroid when the angle from the second plane to the first is less than $180^\circ$.

\medskip

\noindent {\it Case $d>3$}: In higher rank, the idea that any basis among $\a_1, \ldots, \a_n$ must be ``positively oriented" is harder to visualize, and the combinatorics is now more intricate. However, we can still give a realization for the most generic positroid: it is given by any  points $f(x_1), \ldots, f(x_n)$  with $x_1 < \cdots < x_n$ on the moment curve $t \mapsto f(t) = (1, t, t^2, \ldots, t^{d-1})$ in $\R^d$. Every $d \times d$ minor of the resulting matrix is positive, thanks to the Vandermonde determinant. These $n$ points are the vertices of the \emph{cyclic polytope} $C^{n,r}$, whose combinatorics play a key role in the Upper Bound Theorem \cite{UBT}. 
In that sense, the combinatorics of positroids may be seen as a generalization of the combinatorics of cyclic polytopes.
\end{example}

If $A$ is as in Definition \ref{def:positroid} and $I \in \binom{[n]}{d}$ is a $d$-element subset
of $[n]$, then we let $\Delta_I(A)$ denote the $d \times d$ minor of $A$ 
indexed by the column set $I$.  These minors are called the
\emph{Pl\"ucker coordinates} of $A$.

In our study of positroids, we will repeatedly make use of the following notation. Given $k,\ell \in [n]$, we define the \emph{(cyclic) interval} $[k,\ell]$ to be the set
\[
[k,\ell] := 
\begin{cases}
 \{ k, k+1, \dotsc, \ell \} &\text{ if $k \leq \ell$},\\
 \{ k, k+1, \dotsc, n, 1, \dotsc, \ell \} &\text{ if $\ell < k$}.
\end{cases}
\]
We will often put a total order on a cyclic interval in the natural way.

The following proposition says that 
positroids are closed under duality, restriction, and contraction.
For a proof, see for example \cite{ARW}.

\begin{proposition} \label{prop:closed}
Let $M$ be a positroid on $[n]$.  Then $M^*$ is also a positroid on $[n]$.
Furthermore, for any subset $S$ of $[n]$, the restriction
$M|S$ is a positroid on $S$, 
and the contraction $M/S$ is a positroid on 
$[n]-S$.  Here the total orders 
on $S$ and $[n]-S$ are the ones
inherited from $[n]$.  
\end{proposition}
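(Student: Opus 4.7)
The plan is to establish the three closure properties in sequence: duality first, then restriction, and contraction last by combining the other two via Proposition \ref{prop:drc}.

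For \textbf{duality}, I would use the classical orthogonal-complement construction with a sign twist. Given a totally nonnegative $d \times n$ matrix $A$ realizing $M$, choose any $(n-d) \times n$ matrix $C$ whose rows span the kernel of $A$, and form $B$ by multiplying the $i$-th column of $C$ by $(-1)^{i-1}$. A Cramer-rule computation shows that, up to a single global nonzero scalar independent of $J$, one has $\Delta_J(B) = \Delta_{[n] \setminus J}(A)$ for every $J \in \binom{[n]}{n-d}$. Rescaling $C$ so that this scalar is positive makes $B$ totally nonnegative; since complementary Pl\"ucker supports correspond to dual bases, $M(B) = M^*$.

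For \textbf{restriction}, it suffices by induction on $|[n] \setminus S|$ to treat the case $S = [n] \setminus \{i\}$. If $i$ is not a coloop of $M$, then deleting column $\a_i$ from $A$ yields a totally nonnegative $d \times (n-1)$ matrix whose associated matroid is $M|S$: its maximal minors form a subset of those of $A$, and the rank is preserved. If $i$ is a coloop, then $\a_i$ is linearly independent from the remaining columns, and we must reduce the rank by one. In that case I would bring $A$ into a normalized staircase form (for instance via the $\Le$-diagram or Grassmann necklace machinery of Postnikov \cite{postnikov}) in which the coloop column is essentially a standard basis vector, and then extract a $(d-1) \times (n-1)$ totally nonnegative submatrix realizing $M|S$, verifying that the induced total order on $S$ matches the cyclic-interval structure inherited from $[n]$.

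For \textbf{contraction}, Proposition \ref{prop:drc} gives $M/S = \bigl( M^*|([n] \setminus S) \bigr)^*$. Applying the duality result to pass to $M^*$, the restriction result to pass to $M^*|([n] \setminus S)$, and duality once more, shows that $M/S$ is a positroid on $[n] \setminus S$ with the induced total order.

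The main obstacle is the coloop case of the restriction step: naive row operations on a totally nonnegative matrix typically destroy total nonnegativity, so one cannot simply change basis to isolate the coloop column. The cleanest argument passes through a canonical form for totally nonnegative matrices, which requires nontrivial input from the combinatorial structure (decorated permutations or $\Le$-diagrams) developed in \cite{postnikov}; alternatively, one could bypass rank reduction altogether by treating coloops as a separate direct summand and exploiting the compatibility of positroids with direct sums of intervals.
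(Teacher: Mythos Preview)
The paper does not actually prove this proposition: the sentence preceding it reads ``For a proof, see for example \cite{ARW},'' so there is no in-paper argument to compare against. Your outline is the standard route and is essentially what one finds in the cited reference: duality via the sign-twisted orthogonal complement (so that $\Delta_J(B)$ equals $\Delta_{[n]\setminus J}(A)$ up to a global positive scalar), restriction by column deletion, and contraction as $(M^*|([n]\setminus S))^*$ via Proposition~\ref{prop:drc}.

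Your identification of the coloop case as the only nontrivial point is accurate, but the workaround you sketch (canonical forms via $\Le$-diagrams) is heavier than necessary. A cleaner argument stays entirely within the tools you have already set up: if $i$ is a coloop of $M$, then $i$ is a loop of $M^*$, which you already know is a positroid. Deleting a loop from a totally nonnegative realization is trivial (the corresponding column is zero, so dropping it preserves rank and total nonnegativity), hence $M^*|([n]\setminus\{i\})$ is a positroid. Since deleting and contracting a loop coincide, this equals $M^*/\{i\}$, and dualizing once more via Proposition~\ref{prop:drc} gives $M|([n]\setminus\{i\})$. This avoids any row operations or appeal to canonical forms, and matches the spirit of the argument in \cite{ARW}.
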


We say that two disjoint subsets $T$ and $T'$ of $[n]$ are \emph{non-crossing} if there is a cyclic interval of $[n]$ containing $T$ and disjoint from $T'$ (and vice versa). Equivalently, $T$ and $T'$ are non-crossing if there are no $a<b<c<d$ in cyclic order in $[n]$ such that $a, c \in T$ and $b,d \in T'$. 

If $S$ is a partition $[n]=S_1 \sqcup \cdots \sqcup S_t$ of $[n]$ into pairwise disjoint non-empty subsets, 
we say that $S$ is a \emph{non-crossing partition} if any two parts $S_i$ and $S_j$ are non-crossing. 
Equivalently, place the numbers $1,2,\dots, n$ on $n$ vertices around a circle in clockwise order, and then for each $S_i$ draw a polygon on the corresponding vertices. If no two of these polygons intersect, then $S$ is a non-crossing partition of $[n]$. 

Let $NC_n$ denote the set of non-crossing partitions of $[n]$.

\begin{theorem}\label{th:pos-sum}\cite[Theorem 7.6]{ARW}
Let $M$ be a positroid on $[n]$ and let $S_1$, $S_2$, \dots, $S_t$ be the ground sets of the connected components of $M$. Then $\Pi_M=\{S_1, \ldots, S_t\}$ is a non-crossing partition of $[n]$, called the \emph{non-crossing partition of $M$}.

Conversely, if $S_1$, $S_2$, \dots, $S_t$ form a non-crossing partition of $[n]$ and $M_1$, $M_2$, \dots, $M_t$ are connected positroids on $S_1$, $S_2$, \dots, $S_t$, respectively, then 
$M_1 \oplus \dots \oplus M_t$ is a positroid.
\end{theorem}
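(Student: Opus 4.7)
The plan is to prove the two implications of Theorem \ref{th:pos-sum} separately.

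\textbf{Forward direction: components are non-crossing.} I argue by contradiction. Suppose $M$ has two connected components with ground sets $S, S'$ that cross: there exist $a < b < c < d$ (in cyclic order) with $a, c \in S$ and $b, d \in S'$. After relabeling (a cyclic rotation, under which positroids are preserved), I may assume this is also the linear order on $[n]$. Using Proposition \ref{prop:closed} I may also assume $[n] = S \sqcup S'$ and $M = (M|S) \oplus (M|S')$. Since $a \sim c$ in $M|S$, Proposition \ref{prop:equiv} gives bases $B_1^a, B_1^c$ of $M|S$ with $B_1^c = (B_1^a - a) \cup c$; set $B_1 := B_1^a \cap B_1^c$. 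Analogously find $B_2 \subseteq S'$ using $b \sim d$.

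Applying Proposition \ref{prop:closed} twice (contraction, then restriction), the minor
\[
N := \bigl( M / (B_1 \cup B_2) \bigr) \bigm| \{a, b, c, d\}
\]
is a positroid on $\{a, b, c, d\}$. A direct computation using the direct-sum structure of $M$ shows $N$ has rank $2$ with bases exactly $\{a,b\}, \{a,d\}, \{b,c\}, \{c,d\}$; equivalently, $N$ is the direct sum of the rank-$1$ matroids on the diagonals $\{a, c\}$ and $\{b, d\}$. Realize $N$ by a TNN $2 \times 4$ matrix with columns $\tilde\a_a, \tilde\a_b, \tilde\a_c, \tilde\a_d$. The direct-sum structure of $N$ forces $\tilde\a_b \parallel \tilde\a_d$, so $\tilde\a_d = \nu \tilde\a_b$ for some $\nu \neq 0$. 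Bilinearity of the determinant then yields
\[
\Delta_{ad} = \nu\, \Delta_{ab} \qquad \text{and} \qquad \Delta_{cd} = -\nu\, \Delta_{bc},
\]
and all four Plücker coordinates are strictly positive (being bases of a TNN realization). The first equation forces $\nu > 0$ while the second forces $\nu < 0$: contradiction.

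\textbf{Backward direction: non-crossing sums are positroids.} I induct on the number of parts $t$. The case $t = 1$ is trivial. For $t \geq 2$, any non-crossing partition of $[n]$ has an ``outer'' block, which I rename $S_1$, that is a cyclic interval; then $\{S_2, \ldots, S_t\}$ is a non-crossing partition of $[n] \setminus S_1$, so by induction $M' := M_2 \oplus \cdots \oplus M_t$ is a positroid on $[n] \setminus S_1$ realized by some TNN matrix $A'$ of size $d' \times (n - |S_1|)$. Let $A_1$ realize $M_1$ as a TNN matrix of size $d_1 \times |S_1|$, and split $A' = [A'_L \mid A'_R]$ at the cut where $S_1$ was removed. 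The block matrix
\[
A := \begin{pmatrix} \0 & A_1 & \0 \\ A'_L & \0 & A'_R \end{pmatrix}
\]
has its columns in the order inherited from $[n]$; every maximal minor of $A$ either vanishes (if it uses the wrong number of columns from $S_1$) or factors as the product $\det(A_1|_J) \cdot \det(A'|_{J'})$ of two nonnegative minors. Hence $A$ is TNN and realizes $M_1 \oplus M'$.

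\textbf{Main obstacle.} The crux is the forward direction, where the key idea is to reduce via an appropriate matroid minor to the smallest crossing example --- a rank-$2$ positroid on four elements whose components are the diagonals $\{a, c\}$ and $\{b, d\}$ --- and then derive a sign contradiction from two Plücker relations involving parallel columns. The backward direction is essentially a clean induction once one exploits the outer-cyclic-interval property of non-crossing partitions.
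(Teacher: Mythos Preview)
The paper does not prove this theorem; it is quoted from \cite{ARW} without proof, so there is no in-paper argument to compare against. I will therefore comment only on the correctness of your proposal.

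Your forward direction is correct. The reduction to a four-element rank-$2$ minor is clean: after restricting to $S\cup S'$ (still a positroid by Proposition~\ref{prop:closed}) and contracting $B_1\cup B_2$, you land on the unique rank-$2$ matroid on $a<b<c<d$ with parallel classes $\{a,c\}$ and $\{b,d\}$, and your Pl\"ucker sign contradiction is valid.

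Your backward direction has a genuine gap. The block matrix
\[
A=\begin{pmatrix}\0 & A_1 & \0\\ A'_L & \0 & A'_R\end{pmatrix}
\]
is \emph{not} totally nonnegative in general: its maximal minors do not factor with a uniform $+$ sign. Concretely, take $n=4$ with the non-crossing partition $\{\{1,4\},\{2,3\}\}$, and let $M_1$ be the rank-$1$ uniform matroid on $S_1=\{2,3\}$ realized by $A_1=(1\ 1)$, and $M_2$ the rank-$1$ uniform matroid on $\{1,4\}$ realized by $A'=(1\ 1)$. Your recipe produces
\[
A=\begin{pmatrix}0&1&1&0\\1&0&0&1\end{pmatrix},\qquad \Delta_{12}(A)=\det\begin{pmatrix}0&1\\1&0\end{pmatrix}=-1.
\]
More generally, a Laplace expansion along the top $d_1$ rows shows that when $|J\cap S_1|=d_1$ the minor $\Delta_J(A)$ equals $(-1)^{d_1\ell}\,\Delta_{J\cap S_1}(A_1)\,\Delta_{J\setminus S_1}(A')$, where $\ell=|J\cap[1,p-1]|$ and $S_1=[p,q]$; this sign varies with $J$. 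Swapping the row blocks does not help (in the same example one gets $\Delta_{24}=-1$). A correct proof of the converse needs either a careful sign adjustment (for instance, after a cyclic rotation placing $S_1$ as a prefix $[1,q]$, so that $\ell\equiv 0$ always, using that positroids are preserved under cyclic shifts), or a different route such as Grassmann necklaces or the matroid-polytope description in Proposition~\ref{prop:facets}. As written, the inductive step fails.
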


The following key result gives a 
characterization of positroids in terms of their matroid polytopes.

\begin{proposition}\label{prop:facets}\cite{LP}, \cite[Proposition 5.7]{ARW}
A matroid $M$ of rank $d$ on $[n]$ is a positroid if and only if its matroid polytope $\Gamma_M$ can be described by the equality $x_1+ \dotsb + x_n = d$ and inequalities of the form 
\[\sum_{\ell \in [i,j]} x_\ell \leq a_{ij}, \, \text{ with }i,j \in [n].\]
\end{proposition}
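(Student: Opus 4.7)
The plan is to prove both directions by analyzing the facets of $\Gamma_M$ via Proposition~\ref{prop:face}, combined with the non-crossing partition structure of the connected components of a positroid (Theorem~\ref{th:pos-sum}) and the closure of positroids under restriction and contraction (Proposition~\ref{prop:closed}). For the forward direction, let $M$ be a positroid of rank $d$ and let $P \subseteq \RR^n$ be the polytope defined by $\sum_{\ell} x_\ell = d$ together with the cyclic interval inequalities $\sum_{\ell \in [i,j]} x_\ell \leq r_M([i,j])$. Since each such inequality is a valid rank inequality, the containment $\Gamma_M \subseteq P$ is automatic, and it suffices to show that every facet-defining inequality of $\Gamma_M$ is a consequence of the cyclic interval inequalities modulo the equality. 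By Proposition~\ref{prop:face} applied to a linear functional of the form $\mathbf{1}_S$, each facet of $\Gamma_M$ has the form $\sum_{\ell \in S} x_\ell \leq r_M(S)$ for some proper subset $S \subsetneq [n]$ such that $(M|S) \oplus (M/S)$ has exactly one more connected component than $M$. By Proposition~\ref{prop:closed}, $M|S$ and $M/S$ are themselves positroids, so by Theorem~\ref{th:pos-sum} their connected components together form a non-crossing partition of $[n]$ that refines the non-crossing partition of $M$ by a single new split. Exploiting this structure, I would show that each connected component of a positroid can be written as a cyclic interval minus a disjoint union of nested cyclic sub-intervals, and would then rewrite the facet inequality $\sum_{\ell \in S} x_\ell \leq r_M(S)$ as an explicit integer combination of cyclic interval inequalities plus a scalar multiple of the equality.

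For the backward direction, suppose $\Gamma_M$ is cut out by the equality and inequalities of the form $\sum_{\ell \in [i,j]} x_\ell \leq a_{ij}$. The cleanest route is to bootstrap from the forward direction: using Postnikov's parameterization of positroids via Grassmann necklaces, I would produce a positroid $N$ whose cyclic interval ranks agree with those of $M$; then the forward direction applied to $N$ gives $\Gamma_N = \Gamma_M$, and since a matroid is determined by its matroid polytope, $M = N$ is a positroid. An alternative inductive approach would first argue that the cyclic interval description forces the connected components of $M$ to form a non-crossing partition of $[n]$ (by examining which cyclic inequalities are tight on the various facets) and then apply Theorem~\ref{th:pos-sum} to reduce to the connected case, in which a totally nonnegative realization can be constructed explicitly.

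The main obstacle is the forward direction, specifically the step of writing the facet inequality $\sum_{\ell \in S} x_\ell \leq r_M(S)$ as an explicit integer combination of cyclic interval inequalities and the equality. The difficulty lies in the combinatorial bookkeeping when the nested non-crossing components of $M$ and the refined non-crossing components of $(M|S) \oplus (M/S)$ interact in a complicated way. A preliminary lemma describing a connected component of a positroid as a ``cyclic interval with holes''---where the holes are precisely the cyclic supports of its nested descendants in the non-crossing partition---should make the resulting telescoping identity tractable, but arranging this telescoping carefully, especially across multiple levels of nesting, is the most delicate part of the argument.
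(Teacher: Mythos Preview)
The paper does not actually prove Proposition~\ref{prop:facets}; it is quoted as a known result from \cite{LP} and \cite[Proposition 5.7]{ARW} and is used as a black box in the proof of Theorem~\ref{thm:main}. So there is no proof in the paper to compare your proposal against.

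That said, a brief comment on your strategy. Your forward direction is on the right track but more complicated than necessary. Once you know that for a facet-defining set $S$ both $M|S$ and $M/S$ are connected positroids, Theorem~\ref{th:pos-sum} tells you that $\{S,[n]\setminus S\}$ is a non-crossing partition of $[n]$ (at least when $M$ itself is connected). But a two-block non-crossing partition of $[n]$ is precisely a partition into two complementary cyclic intervals, so $S$ is already a cyclic interval---no telescoping or integer combinations are needed. The general (disconnected) case then follows by writing $\Gamma_M$ as the product $\prod_i \Gamma_{M_i}$ over the connected components and using the non-crossing structure of the $S_i$ to convert cyclic intervals of each $S_i$ into cyclic intervals of $[n]$, modulo the equalities forced by the direct-sum decomposition. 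Your ``cyclic interval with holes'' lemma is essentially this observation, but framing it as rewriting facet inequalities rather than directly identifying the flacets as cyclic intervals makes the argument look harder than it is.

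For the backward direction, your Grassmann-necklace bootstrap is a reasonable route and close in spirit to what is done in \cite{ARW}.
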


\section{Oriented matroids and positively oriented matroids}
\label{sec:POMs}

An oriented matroid is a signed version of the notion of matroid.  
Just as for matroids, there are several equivalent points of view and axiom systems.  
We will mostly focus on the chirotope point of view, but we will also use
the signed circuit axioms. For a thorough introduction to the theory of oriented matroids, see \cite{OM}.

\begin{definition}\cite[Theorem 3.6.2]{OM}\label{3term}
An \emph{oriented matroid} $\M$ of rank $d$ is a pair $(E,\chi)$ consisting of a finite set $E$
and a \emph{chirotope} $\chi: E^d \to \{-1,0,1\}$ that satisfies the following properties:
\begin{enumerate}
\item[(B$1'$)] The map $\chi$ is alternating, i.e.,
for any permutation $\sigma$ of $[d]$ and any $y_1,\dots,y_d \in E$, we have
 \[
  \chi(y_{\sigma(1)},\dots,y_{\sigma(d)}) = \sign(\sigma) \cdot \chi(y_1,\dots,y_d),
 \]
where $\sign(\sigma)$ is the sign of $\sigma$.
Moreover, the $d$-subsets $\{y_1, \dotsc, y_d\}$ of $E$ such that $\chi(y_1, \dotsc, y_d) \neq 0$ are the bases of a matroid on $E$.
\item[(B$2'''$)] For any $v_1, v_2, v_3, v_4, y_3,y_4, \dots,y_d \in E$,
\begin{center}
if $\epsilon:=\chi(v_1,v_2,y_3,y_4,\dots,y_d)  
\cdot \chi(v_3,v_4,y_3,y_4,\dots,y_d) \in \{-1,1\},$
\end{center}
then either
\begin{align*}
\chi(v_3,v_2,y_3,y_4,\dots,y_d)  
\cdot \chi(v_1,v_4,y_3,y_4,\dots,y_d) &= \epsilon \quad \text{ or}\\
\chi(v_2, v_4,y_3,y_4,\dots,y_d)  
\cdot \chi(v_1, v_3,y_3,y_4,\dots,y_d) &= \epsilon.
\end{align*}
\end{enumerate}
We consider $(E,\chi)$ to be the same oriented matroid
as $(E,-\chi)$.
\end{definition}

Definition \ref{3term} differs slightly from the usual definition
of chirotope, but it is equivalent to the usual definition
by \cite[Theorem 3.6.2]{OM}.  We prefer to work with the definition
above because it is closely related to the 3-term Grassmann-Pl\"ucker relations.

Note that the value of $\chi$ on a $d$-tuple
$(y_1,\dots,y_d)$ determines the value of $\chi$ on every $d$-tuple
obtained by permuting $y_1,\dots,y_d$.
Therefore when $E$ is a set with a total order
we will make the following convention:
if $I=\{i_1,\dots,i_d\}$ is a $d$-element subset of $E$ with 
$i_1 < \dots < i_d$ then 
we will let $\chi(I)$ denote
$\chi(i_1,\dots,i_d)$.  We may then think of 
$\chi$ as a function whose domain is the set of $d$-element subsets of $E$.

\begin{example}
Let $A$ be a $d \times n$ matrix of rank $d$ with entries in an ordered field $K$.
Recall that for a $d$-element subset $I$ of $[n]$ 
we let $\Delta_I(A)$ denote the determinant of the $d \times d$ submatrix of $A$
consisting of the columns indexed by $I$.
We obtain a 
chirotope $\chi_A: \binom{[n]}{d} \to \{-1,0,1\}$ by setting
\begin{equation}\label{eq:ch}
\chi_A(I) = 
\begin{cases}
0 &\mbox{if }\Delta_I(A)=0,\\
1 &\mbox{if }\Delta_I(A)>0,\\
-1 &\mbox{if }\Delta_I(A)<0.
\end{cases}
\end{equation}
An oriented matroid $\M=([n],\chi(A))$ arising in this way is called \emph{realizable over the field $K$}.
\end{example}

\begin{definition}
If $\M = (E,\chi)$ is an oriented matroid, 
its \emph{underlying matroid} $\underline{\M}$ is the (unoriented) matroid 
$\underline{\M}:=(E,\B)$ whose bases $\B$ are precisely the sets $\{b_1,\dots,b_d\}$ such that 
$\chi(b_1,\dots,b_d)$ is nonzero.
\end{definition}

\begin{remark}
Every oriented matroid $\M$ gives rise in this way to a matroid $\underline{\M}$.
However, given a matroid 
$(E,\B)$ it is not in general possible to give it the structure of an oriented matroid; that is,
it is  not always possible to find a chirotope
$\chi$ such that $\chi$ is nonzero precisely on 
the bases $\B$.
\end{remark}

\begin{definition}
 If $\M = (E,\chi)$ is an oriented matroid, any $A \subseteq E$ induces a \emph{reorientation} 
 $_{-A}\M := (E, \, _{-A}\chi)$ of $\M$, where $_{-A}\chi$ is the chirotope 
 \[
  _{-A}\chi(y_1,\dotsc,y_d) := (-1)^{|A\cap \{y_1, \dotsc,y_d\}|} \cdot \chi(y_1,\dotsc,y_d).
 \]
 This can be thought of as the oriented matroid obtained from $\M$ by ``changing the sign of the vectors in $A$''.
\end{definition}

The following definition introduces our main objects of study.

\begin{definition}
Let  $\M = (E,\chi)$ be an oriented matroid of rank $d$  on a set $E$
with a linear order $<$. We say $\M$ is  \emph{positively oriented with 
respect to $<$} if there is a reorientation $_{-A}\chi$ that makes all bases positive; that is,
\[
_{-A}\chi(I) \ := \ _{-A}\chi(i_1, i_2, \ldots, i_d) \ \geq \ 0
\]
for every $d$-element subset $I = \{i_1 < i_2 < \dotsc < i_d\} \subseteq E$.
\end{definition}

One can also define oriented matroids using the signed circuit axioms.

\begin{definition}
Let $E$ be a finite set.  Let $\mathcal{C}$ be a collection of \emph{signed
subsets} of $E$.
If $X\in \mathcal{C}$, we let $\underline{X}$ denote the underlying
(unsigned) subset of $E$, and $X^+$ and $X^-$ denote the subsets
of $\underline{X}$ consisting of the elements which have positive and negative signs,
respectively.  If the following
axioms hold for $\mathcal{C}$, then we say that $\mathcal{C}$
is the set of \emph{signed circuits} of an \emph{oriented matroid}
on $E$.
\begin{enumerate}
\item[(C0)] $\emptyset \notin \mathcal{C}$.
\item[(C1)] (symmetric) $\mathcal{C} = -\mathcal{C}$.
\item[(C2)] (incomparable) For all $X, Y \in \mathcal{C}$,
if  $\underline{X} \subset
\underline{Y}$, then $X=Y$ or $X=-Y$.
\item[(C3)] (weak elimination) for all $X,Y \in \mathcal{C}$,
$X \neq -Y$, and $e\in X^+ \cap Y^-$ there is a 
$Z\in \mathcal{C}$ such that 
\begin{itemize}
\item $Z^+ \subseteq (X^+ \cup Y^+) - e$ and 
\item $Z^- \subseteq (X^- \cup Y^-) - e$.
\end{itemize}
\end{enumerate}
\end{definition}

If $C$ is a signed subset of $E$ and $e \in C$, we will denote by $C(e)$ the sign of $e$ in $C$,
that is, $C(e) = 1$ if $e \in C^+$, and $C(e) = -1$ if $e \in C^-$. 

\begin{remark}\label{rem:equivalence}
The chirotope axioms and signed circuit axioms
for oriented matroids are equivalent. While the proof of this equivalence is intricate, the bijection is easy to describe, as follows.
For more details, see \cite[Theorem 3.5.5]{OM}. 
Given the chirotope $\chi$ of an oriented matroid $\M$, one can read off the bases of
the underlying matroid $\underline{\M}$ by 
looking at the subsets that $\chi$ assigns a nonzero value.
Then each circuit $\underline{C}$ of $\underline{\M}$ 
gives rise to a signed circuit $C$ (up to sign) as follows. 
If $e, f \in \underline{C}$ are distinct, let 
\[
\sigma(e,f) := - \chi(e,X)\cdot \chi(f,X) \in \{-1,1\},
\]
where $(f,X)$ is any ordered basis of $M$ containing $\underline{C}-e$.
The value of $\sigma(e,f)$ does not depend on the choice of $X$.
Let $c \in \underline{C}$, and let % $C$ to be the signed circuit defined by
\begin{align*}
C^+ &:= \{ c\} \cup \{ f \in \underline{C}-c \mid \sigma(c,f) = 1\},\\
C^- &:= \{ f \in \underline{C}-c \mid \sigma(c,f) = -1\}.
\end{align*}
The signed circuit $C$ arising in this way does not depend (up to global sign) on the choice of $c$.
Finally, take $\C$ be the collection of signed circuits of $\M$ just described (together with their negatives). 
\end{remark}

\begin{lemma}\label{lemma:rotate}
If an oriented matroid $\M = ([n],\chi)$ is positively oriented with respect to the order $1 < 2 < \cdots < n$, then it is also positively oriented with respect to the  order $i < i+1 < \cdots < n < 1 < \cdots < i-1$, for any $1 \leq i \leq n$.
\end{lemma}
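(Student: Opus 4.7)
The plan is, given a reorientation $A \subseteq [n]$ witnessing the positive orientation under $<$, to exhibit an explicit reorientation $A' \subseteq [n]$ that witnesses it under the rotated order $<'$. The key observation is that the value of $\chi$ on a $d$-subset $I$ depends on the chosen linear order only through a permutation sign, by axiom (B$1'$), so the difference between $<$ and $<'$ can be absorbed into a judicious choice of reorientation.

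Concretely, fix a $d$-subset $I \subseteq [n]$ and set $a := |I \cap \{1, 2, \dotsc, i-1\}|$ and $b := d - a$. Writing $I$ in the order $<'$ rather than $<$ amounts to cyclically shifting the block of its $a$ smallest elements past the block of its $b$ largest elements. This permutation has exactly $ab$ inversions, so by (B$1'$) one has $\chi_{<'}(I) = (-1)^{ab}\chi_{<}(I)$, where $\chi_{<}(I)$ and $\chi_{<'}(I)$ denote the value of $\chi$ on $I$ listed in the orders $<$ and $<'$ respectively.

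I would then look for $A'$ such that $(-1)^{|I \cap A'|}\chi_{<'}(I) \geq 0$ for every $d$-subset $I$. Substituting the relation above and using the hypothesis $(-1)^{|I \cap A|}\chi_{<}(I) \geq 0$, it suffices to arrange that $|I \cap A'| + ab \equiv |I \cap A| \pmod 2$ whenever $\chi(I) \neq 0$. Since $ab = a(d-a) \equiv a(d-1) \pmod 2$, this splits into two cases according to the parity of $d$: if $d$ is odd then $ab$ is always even and one may take $A' := A$; if $d$ is even then $ab \equiv |I \cap \{1, \dotsc, i-1\}| \pmod 2$, and the elementary identity $|I \cap X| + |I \cap Y| \equiv |I \cap (X \triangle Y)| \pmod 2$ shows that $A' := A \triangle \{1, \dotsc, i-1\}$ works.

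The only potential source of confusion is the parity bookkeeping in the second step (computing the sign of the cyclic-shift permutation); once that is set up correctly, the choice of $A'$ is essentially forced, and the argument reduces to a one-line verification. There is no deeper obstacle, since the statement really just records how the witness set for positive orientation transforms under a rotation of the ground set.
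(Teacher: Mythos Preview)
Your proof is correct and uses the same underlying idea as the paper's: compute the sign of the permutation that reorders a basis from $<$ to $<'$, then absorb that sign into a new reorientation set, splitting into cases on the parity of $d$. The only cosmetic difference is that the paper first reduces to the one-step rotation $i=2$ (so $a\in\{0,1\}$ and the sign is simply $(-1)^{d-1}$ when $1\in B$) and then iterates, whereas you handle arbitrary $i$ in one shot via the identity $ab\equiv a(d-1)\pmod 2$; your choice $A' = A \triangle \{1,\dots,i-1\}$ specializes, for $i=2$ and $d$ even, exactly to the paper's reorientation by $\{1\}$.
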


\begin{proof}
It suffices to prove this for $i=2$. After reorienting, we may assume that the bases of $\M$ are all positive with respect to the 
order $1<2<\dots<n$.
Consider a basis $B=\{b_1 < b_2 < \cdots < b_d\}$. If $1 \notin B$ then $B$ is automatically positive with respect to the new order. Otherwise, if $1 \in B$ then
\[
\chi(b_2, \ldots, b_d, 1) = (-1)^{d-1}\chi(1, b_2, \ldots, b_d) = (-1)^{d-1}.
\]
%which is positive if and only if $d$ is odd.
Hence if $d$ is odd, all bases of $\M$ are positive with respect to $2 < \cdots < n < 1$. If $d$ is even, all bases of the reorientation $_{-\{1\}}\M$ are positive with respect to $2 < \cdots < n < 1$. In either case, the desired result holds. %$\M$ is positively oriented with respect to the new order.
\end{proof}

\begin{definition}
Let $\M = (E, \chi)$ be an oriented matroid of rank $d$, and let $A \subseteq E$.
Suppose that $E-A$ has rank $d'$, and choose $a_1, \ldots, a_{d-d'} \in A$
such that $(E-A) \cup \{a_1, \ldots, a_{d-d'}\}$ has rank $d$.
The \emph{deletion} $\M - A$, or \emph{restriction} $\M|(E-A)$, is the oriented matroid on $E - A$ with chirotope
\[
\chi_{\M - A}(b_1, \ldots, b_{d'}) := \chi_M(b_1, \ldots, b_{d'}, a_1, \ldots, a_{d-d'}),
\]
for $b_1, \ldots, b_{d'} \in E-A$. This oriented matroid is independent of %the choice of 
$a_1, \ldots, a_{d-d'}$. 
\end{definition}

Positively oriented matroids are closed under restriction.

\begin{lemma}\label{lemma:restriction}
Let $\M$ be a positively oriented matroid on $[n]$. 
For any $S \subseteq [n]$, the restriction
$\M|S$ is positively oriented on $S$. Here the total order
on $S$ is inherited from the order $1 < \cdots < n$.  
\end{lemma}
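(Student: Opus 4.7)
The plan is to reduce to the case where all bases of $\M$ itself are positive (by applying the reorientation that witnesses positive orientation of $\M$), and then to track the signs that appear when we evaluate $\chi_{\M|S}$ on bases of $\M|S$ and find a single reorientation on $S$ that corrects them all.

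Concretely, after reorienting we may assume $\chi_\M(c_1,\dots,c_d)\ge 0$ whenever $c_1<\cdots<c_d$. Pick once and for all elements $a_1<a_2<\cdots<a_{d-d'}$ in $[n]-S$ so that $S\cup\{a_1,\dots,a_{d-d'}\}$ has rank $d$; this is the data used to compute the restriction's chirotope. For a basis $B=\{b_1<\cdots<b_{d'}\}$ of $\M|S$, the tuple $(b_1,\dots,b_{d'},a_1,\dots,a_{d-d'})$ is a permutation of its sorted reordering, and since the $b_i$ are sorted among themselves and the $a_j$ are sorted among themselves, the sign of the sorting permutation is $(-1)^{N(B)}$, where $N(B)=\#\{(i,j):b_i>a_j\}$. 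By the alternating property of $\chi_\M$,
\[
\chi_{\M|S}(b_1,\dots,b_{d'})=(-1)^{N(B)}\chi_\M(c_1,\dots,c_d)\,\ge\,(-1)^{N(B)}\cdot 0,
\]
so the only obstruction to positivity is the parity of $N(B)$, which varies with $B$.

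The key observation is that this parity comes from a linear functional on $B$. Define
\[
T_j:=\{s\in S:s>a_j\}\subseteq S,\qquad T:=T_1\triangle T_2\triangle\cdots\triangle T_{d-d'}\subseteq S.
\]
Since $B\subseteq S$, we have $\#\{b\in B:b>a_j\}=|T_j\cap B|$, so $N(B)=\sum_j|T_j\cap B|\equiv|T\cap B|\pmod 2$, using that symmetric difference is mod-$2$ addition of indicator functions. Therefore
\[
{}_{-T}\chi_{\M|S}(B)=(-1)^{|T\cap B|}\chi_{\M|S}(B)=(-1)^{|T\cap B|+N(B)}\chi_\M(c_1,\dots,c_d)\ge 0
\]
for every basis $B$ of $\M|S$, so the reorientation ${}_{-T}(\M|S)$ witnesses that $\M|S$ is positively oriented with respect to the inherited order on $S$.

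The main subtlety is not the choice of $T$ (the parity argument is clean) but making sure the argument is independent of the choice of $a_1,\dots,a_{d-d'}$ used to define $\chi_{\M|S}$; this is guaranteed by the Definition of the restriction, so one only needs to pick a convenient ordered collection to carry out the parity bookkeeping. The rest is a direct verification.
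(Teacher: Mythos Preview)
Your argument is correct. The parity bookkeeping is clean: after sorting the concatenation $(b_1,\dots,b_{d'},a_1,\dots,a_{d-d'})$, the sign is $(-1)^{N(B)}$ with $N(B)=\sum_j|T_j\cap B|$, and the symmetric-difference identity $N(B)\equiv|T\cap B|\pmod 2$ is exactly what makes a single reorientation $T\subseteq S$ fix all bases at once. One small step you use implicitly at the start is that reorienting $\M$ before restricting is harmless, because $({}_{-A'}\M)|S$ and $\M|S$ differ only by a reorientation (over $A'\cap S$) and a global sign; this is immediate from the definitions.

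The paper takes a different route: it reduces to deleting a single element and invokes Lemma~\ref{lemma:rotate} (closure of positive orientation under cyclic shifts) to assume the deleted element is $n$. With the deleted element at the end of the order, the concatenation $(b_1,\dots,b_{d-1},n)$ is already sorted, so no sign appears and no extra reorientation is needed. Your approach is more self-contained---it does not rely on the rotation lemma or on induction over single deletions---and it produces an explicit formula for the correcting reorientation $T$. The paper's approach is shorter precisely because it exports the sign issue to Lemma~\ref{lemma:rotate}, which is needed elsewhere in the paper anyway.
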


\begin{proof}
It suffices to show that the deletion $\M - i$ %of a single element 
of $i$ is positively oriented for any element $1 \leq i \leq n$. By Lemma \ref{lemma:rotate}, we may assume that $i=n$. We can also assume, after reorientation, that the bases of $\M$ are positive.

If $r(\M - n) = r(\M) =: d$, then the bases of $\M-n$ are also bases of $\M$, and they inherit their (positive) orientation from $\M$. Otherwise, if $r(\M - n) = d-1$, then each basis $\{a_1 < \cdots < a_{d-1}\}$ of $\M-n$ satisfies
\[
\chi_{\M-n}(a_1, \ldots, a_{d-1}) = \chi_\M(a_1, \ldots, a_{d-1}, n) = 1,
\]
since $\{a_1 < \cdots < a_{d-1}<n\}$ is a basis of $\M$. 
\end{proof}

\begin{definition}
Let $\M_1 = (E_1, \chi_1)$ and $\M_2 = (E_2, \chi_2)$ be oriented matroids on disjoint sets having ranks $d_1$ and $d_2$, respectively. 
The \emph{direct sum} 
$\M_1 \oplus \M_2$ is the oriented matroid on the set $E_1 \sqcup E_2$ whose chirotope $\chi$ is %given by
\[
 \chi (e_1,\dotsc,e_{d_1},f_1, \dotsc,f_{d_2}):= \chi_1(e_1, \dotsc,e_{d_1}) \cdot \chi_2 (f_1, \dotsc, f_{d_2}).
\]
The corresponding underlying matroids satisfy 
\[
 \underline{\M_1 \oplus \M_2} = \underline{\M_1} \oplus \underline{\M_2}.
\]
It is not hard to check that if $\C_1$ and $\C_2$ are the sets of signed circuits of the two oriented matroids $\M_1$ and $\M_2$, then $\C_1 \sqcup \C_2$ is the set of signed circuits of their direct sum $\M_1 \oplus \M_2$.
We say that an oriented matroid $\M$ is \emph{connected} if it cannot be decomposed 
as a direct sum of two oriented matroids on nonempty ground sets.
\end{definition}

\begin{proposition}\label{prop:connected}
 An oriented matroid $\M$ is connected if and only if its underlying matroid $\underline{\M}$ is connected.
\end{proposition}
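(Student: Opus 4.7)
The plan is to use the signed circuit description of oriented matroids together with Remark~\ref{rem:equivalence} in order to reduce the connectedness question for $\M$ to circuit-level bookkeeping in $\underline{\M}$, where Proposition~\ref{prop:equiv} provides the translation. One direction is immediate from the definition of direct sum: if $\M = \M_1 \oplus \M_2$ with both summands on nonempty ground sets, then $\underline{\M} = \underline{\M_1} \oplus \underline{\M_2}$ is disconnected. Contrapositively, if $\underline{\M}$ is connected then so is $\M$.

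For the reverse implication, suppose $\underline{\M} = M_1 \oplus M_2$ with $M_i$ nonempty on $E_i$ and $E = E_1 \sqcup E_2$. Since the circuits of a direct sum of matroids are exactly the circuits of its summands, every circuit of $\underline{\M}$ lies entirely in $E_1$ or entirely in $E_2$. By Remark~\ref{rem:equivalence}, the underlying set of every signed circuit of $\M$ is a circuit of $\underline{\M}$, so the collection $\C$ of signed circuits of $\M$ splits as $\C = \C_1 \sqcup \C_2$, where $\C_i$ consists of those signed circuits whose support is contained in $E_i$.

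Next I would set $\M_i := \M|E_i$. Its underlying matroid is $M_i$, which has rank $r(M_i) =: d_i$, and by combining the chirotope formula for restriction with the recipe of Remark~\ref{rem:equivalence} one checks that the signed circuits of $\M_i$ coincide (up to global sign) with $\C_i$: given $e, f$ in a circuit of $M_i$ and a witnessing ordered basis $(f, X)$ of $M_i$, extending $X$ by any fixed basis of $M_{3-i}$ yields an ordered basis of $\underline{\M}$ that witnesses the same value of $\sigma(e,f)$ in $\M$ as in $\M_i$. By the direct sum definition in the excerpt, the signed circuits of $\M_1 \oplus \M_2$ are $\C_1 \sqcup \C_2 = \C$, and since an oriented matroid is determined by its signed circuits, it follows that $\M = \M_1 \oplus \M_2$, so $\M$ is disconnected. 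The main obstacle is not conceptually deep but notational: one must carefully verify the compatibility between restriction, direct sums, and the chirotope/signed-circuit translation, after which the result falls out of Proposition~\ref{prop:equiv} and the definitions of this section.
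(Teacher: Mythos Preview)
Your argument is correct and follows the same overall strategy as the paper: split the signed circuits of $\M$ along the ground sets $E_1, E_2$, build oriented matroids $\M_i$ on each piece, and conclude $\M = \M_1 \oplus \M_2$. The technical implementation differs in one respect. The paper defines each $\M_i$ directly from the signed circuit collection $\C_i$ (checking the axioms hold), and then establishes $\M = \M_1 \oplus \M_2$ at the \emph{chirotope} level: it fixes the signs of $\chi_1, \chi_2$ on one pair of bases so that $\chi(B_1,B_2)=\chi_1(B_1)\chi_2(B_2)$, and propagates this identity to all bases via the pivoting property. You instead take $\M_i = \M|E_i$, verify via Remark~\ref{rem:equivalence} and the restriction chirotope that the signed circuits of $\M_i$ are exactly $\C_i$, and then conclude immediately from the fact that $\M$ and $\M_1\oplus\M_2$ have the same signed circuit collection. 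Your route is a bit shorter, trading the explicit pivoting computation for a heavier reliance on the chirotope--circuit cryptomorphism; the paper's route makes the chirotope factorization $\chi = \chi_1 \cdot \chi_2$ visible, which is closer in spirit to the chirotope-centric definitions used elsewhere in the section.
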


\begin{proof}
It is clear that if $\underline{\M}$ is connected
then $\M$ is connected. 
Conversely, suppose that $\mathcal{M}$ is a connected oriented matroid.
We assume, for the sake of contradiction, that
$\underline{\M} = \underline{\M_1} \oplus \underline{\M_2}$ is the 
direct sum of two matroids on disjoint ground sets $E_1$ and $E_2$.
Let $\C$ and $\underline{\C}$ be the sets of signed 
and unsigned circuits of $\M$, respectively, and let $\underline{\C_1}$ 
and $\underline{\C_2}$ be the sets of (unsigned) circuits of $\underline{\M_1}$ 
and $\underline{\M_2}$. We have 
$\underline{\C} = \underline{\C_1} \sqcup \underline{\C_2}$.
For $i = 1, 2$, let $\C_i$  be the set of signed circuits obtained by 
giving each circuit in $\underline{\C_i}$ the signature that it has in $\C$. 
One easily checks that each $\C_i$ satisfies the signed circuit axioms, 
and hence it
defines an orientation $\M_i$ of the matroid $\underline{\M_i}$. 
We claim that $\M = \M_1 \oplus \M_2$. 

Since $\C_1$ and $\C_2$ determine $\chi_1$ and $\chi_2$ up to sign only, we need to show that there is a choice of signs that satisfies 
\begin{equation}\label{eq:signs}
\chi(A_1, A_2) = \chi_1(A_1)\cdot \chi_2(A_2)
\end{equation}
for any ordered bases $A_1, A_2$ of $\M_1, \M_2$. Here $(A_1, A_2)$ denotes the ordered basis of $\M$ where we list $A_1$ first and then $A_2$. 
Choose ordered bases $B_1$ and $B_2$ of $\M_1$ and $\M_2$. We may choose $\chi_1(B_1)$ and $\chi_2(B_2)$ so that (\ref{eq:signs}) holds for $B_1$ and $B_2$. 
Notice that (\ref{eq:signs}) will also hold for any reordering of $B_1$ and $B_2$. 

Now we prove that (\ref{eq:signs}) holds for any adjacent basis, which differs from $B_1 \sqcup B_2$ by a basis exchange; we may assume it is
$B_1' \sqcup B_2$ where $B_1' = (B_1 - e) \cup f$ and $e$ is the first element of $B_1$.
If $B_1 = (e, e_2,\dotsc, e_m)$, order the elements of $B'_1$ as $B_1' = (f,e_2,\dotsc,e_m)$. 
If $C$ is the signed circuit (of $\M$ and $\M_1$) contained in $B_1 \cup f$, the pivoting property \cite[Definition 3.5.1]{OM} applied to $\M$ and $\M_1$ gives
\[
\chi(B_1', B_2) = - C(e)C(f) \chi(B_1, B_2), \qquad 
%\]
%and 
%\[
\chi_1(B_1') =  - C(e)C(f) \chi_1(B_1).
\]
Therefore, if (\ref{eq:signs}) holds for $B_1 \sqcup B_2$, it also holds for the adjacent basis $B_1' \sqcup B_2$. 
Since all bases of $\M$ are connected by basis exchanges, 
Equation (\ref{eq:signs}) holds for all bases.  Therefore
$\M = \M_1 \oplus \M_2$ as oriented matroids, which contradicts the connectedness of $\M$.
\end{proof}

\section{Every positively oriented matroid is realizable}
\label{sec:theorem}

The main result of this paper is the following.
\begin{theorem}\label{thm:main}
Every positively oriented matroid is realizable over $\QQ$. \linebreak
Equivalently, the underlying matroid of any positively oriented matroid is a positroid.
\end{theorem}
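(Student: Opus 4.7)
The plan is to induct on $n$ and verify the matroid-polytope criterion of Proposition~\ref{prop:facets}: it suffices to show that every facet of $\Gamma_{\underline{\M}}$ has the form $\sum_{\ell\in [i,j]} x_\ell \leq a_{ij}$ with $[i,j]$ a cyclic interval. Lemma~\ref{lemma:rotate} allows cyclic rotation of the ground set whenever convenient, and Lemma~\ref{lemma:restriction} passes positive orientation to restrictions. Before launching the induction, I would prove as a preliminary lemma that positive orientation is preserved under duality, by direct sign-chasing in the chirotope duality formula; this yields preservation under contraction as well, so both restrictions and contractions are available to the induction.

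The first main step is to reduce to the case that $\underline{\M}$ is connected. If $\underline{\M}=\underline{\M_1}\oplus\underline{\M_2}$ on disjoint sets $E_1\sqcup E_2=[n]$, then by Proposition~\ref{prop:connected} the oriented matroid itself splits as $\M=\M_1\oplus\M_2$, and each $\M_i=\M|E_i$ is positively oriented by Lemma~\ref{lemma:restriction}. By the inductive hypothesis each $\underline{\M_i}$ is a positroid, so Theorem~\ref{th:pos-sum} reduces the problem to showing that $\{E_1,E_2\}$ is non-crossing. Assuming for contradiction that $a<b<c<d$ in cyclic order with $a,c\in E_1$ and $b,d\in E_2$, I would invoke Proposition~\ref{prop:equiv} to produce bases $B_1,(B_1-a)\cup c\in\B(\underline{\M_1})$ and $B_2,(B_2-b)\cup d\in\B(\underline{\M_2})$, and then exploit the product formula from the proof of Proposition~\ref{prop:connected}: the value of $\chi$ on a basis $B_1\sqcup B_2$ listed in sorted order equals $\chi_1(B_1)\chi_2(B_2)$ times the sign of the shuffle merging $B_1$ and $B_2$. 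The crossing pattern flips this shuffle sign between the two resulting bases of $\underline{\M}$ while preserving both $\chi_i$-signs, contradicting positive orientation of $\M$.

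Now assume $\underline{\M}$ is connected. By Proposition~\ref{prop:face} and Lemma~\ref{lem:dimpoly}, each facet of $\Gamma_{\underline{\M}}$ corresponds to a flag $\emptyset=A_0\subsetneq A_1\subsetneq\cdots\subsetneq A_k=[n]$ whose associated direct sum $\bigoplus_i(\underline{\M}|A_i)/A_{i-1}$ has exactly $c(\underline{\M})+1=2$ connected components. Summing component counts forces $k=2$ with both $\underline{\M}|A_1$ and $\underline{\M}/A_1$ connected, and the facet reads $\sum_{\ell\in A}x_\ell\leq r(A)$ for $A:=A_1$. By the inductive hypothesis together with restriction- and contraction-closure of positive orientation, both $\underline{\M}|A$ and $\underline{\M}/A$ are connected positroids on their respective ground sets. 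If $A$ crossed its complement through $a<b<c<d$ cyclically with $a,c\in A$ and $b,d\in[n]-A$, I would apply axiom (B$2'''$) of Definition~\ref{3term} to the quadruple $\{v_1,v_2,v_3,v_4\}=\{a,b,c,d\}$ together with a $(d-2)$-element frame $y_3,\dots,y_d$ chosen so that $\{y_3,\dots,y_d\}\cap A$ is a common basis extension of both $\{a,c\}$-exchanges inside $\underline{\M}|A$ and $\{y_3,\dots,y_d\}\cap([n]-A)$ is a common basis extension of both $\{b,d\}$-exchanges inside $\underline{\M}/A$; the uniform positivity of $\chi$ is then incompatible with the disjunctive conclusion of (B$2'''$).

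The main obstacle is this final step: axiom (B$2'''$) records only a single basis exchange at a time, so extracting a sign contradiction from a \emph{double} exchange, one inside $A$ and one inside $[n]-A$, requires constructing the frame $y_3,\dots,y_d$ compatibly with basis exchanges on both sides of the cut. Connectedness of both $\underline{\M}|A$ and $\underline{\M}/A$ provides the needed flexibility through Proposition~\ref{prop:equiv}, but assembling the frame explicitly and tracking the three resulting chirotope signs through the cyclic order is the combinatorial crux of the argument. If this frame construction proves too delicate, a natural alternative is to bypass the facet analysis and build a rational realization directly by inducting on the non-crossing partition structure of Theorem~\ref{th:pos-sum}, using the cyclic polytope realization of the uniform positroid as the generic starting point and gluing components along pivot columns.
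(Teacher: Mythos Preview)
Your overall plan coincides with the paper's: reduce to the connected case, then for a connected $\underline\M$ take a putative non-interval facet $\sum_{\ell\in S}x_\ell=r(S)$ of $\Gamma_{\underline\M}$, choose a crossing quadruple $i<j<k<\ell$ with $i,k\in S$ and $j,\ell\notin S$, build a $(d-2)$-element frame from compatible basis exchanges in $M|S$ and $M/S$, and apply (B$2'''$). Your reduction to the connected case is handled differently: the paper invokes da~Silva's circuit/cocircuit non-crossing criterion (Theorem~\ref{thm:ilda}) via Proposition~\ref{lem:POMdirectsum}, whereas your shuffle-sign computation is a legitimate and more self-contained alternative (just be sure to take $a,c$ in the same connected component and likewise $b,d$, so that Proposition~\ref{prop:equiv} actually supplies the exchanges). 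The induction on $n$ and the proposed closure of positive orientation under duality/contraction are unnecessary: in your own connected-case sketch you never use that $\underline\M|A$ or $\underline\M/A$ is a positroid, only that each is connected, and that already follows from the facet having exactly two components.

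The genuine gap is in the last step. You assert that ``uniform positivity of $\chi$ is then incompatible with the disjunctive conclusion of (B$2'''$),'' but positivity by itself is \emph{compatible} with (B$2'''$): if all six $d$-subsets built from $\{i,j,k,\ell\}$ and the frame were bases, a direct parity count gives $\chi(j,\ell,\mathbf y)\,\chi(i,k,\mathbf y)=\epsilon$, so (B$2'''$) would hold. What actually drives the contradiction is the additional fact---implicit in your frame construction but neither stated nor used---that $\{i,k\}\cup\{y_3,\dots,y_d\}$ is \emph{not} a basis of $M$, because it meets $S$ in $r(S)+1$ elements. This forces one disjunct of (B$2'''$) to be $0$, and the sign computation (equation~\eqref{claim} in the paper) then shows the remaining disjunct equals $-\epsilon$. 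So the ``combinatorial crux'' is not the frame construction, which is routine from Proposition~\ref{prop:equiv}, but rather recognizing this non-basis and carrying out the parity count of~\eqref{claim}.
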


In the proof of Theorem \ref{thm:main} we will make use of the forward direction in the following characterization. 
The full result, due to da Silva, appears in the unpublished work \cite{daS}. 
For completeness, we include a proof of the direction we use.

\begin{theorem}[{\cite[Chapter 4, Theorem 1.1]{daS}}]\label{thm:ilda}
 A matroid $M$ on the set $[n]$ is the underlying matroid of a positively oriented matroid
 if and only if 
 \begin{itemize}
  \item for any circuit $C$ and any cocircuit $C^*$ satisfying $C \cap C^* = \emptyset$, \\
  the sets $C$ and $C^*$ are non-crossing subsets of $[n]$.
 \end{itemize}
\end{theorem}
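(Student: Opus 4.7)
The plan is to prove this direction by contradiction. I would assume $\M$ is positively oriented (reorienting so all bases are positive), and suppose $C$ and $C^*$ are a circuit and cocircuit of $\underline{\M}$ with $C \cap C^* = \emptyset$ that are crossing: there exist $a, b, c, d$ with $a < b < c < d$ in the cyclic order, $a, c \in C$, $b, d \in C^*$. By Lemma~\ref{lemma:rotate}, I would rotate so that $1 \leq a < b < c < d \leq n$ in the usual linear order. Since the hyperplane $H := [n] \setminus C^*$ has rank $d - 1$ and contains $C$, we have $|C| \leq d$.

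The engine of the proof will be the chirotope axiom (B$2'''$) applied to $(v_1, v_2, v_3, v_4) = (a, b, c, d)$ with a carefully chosen common extension $Y$. Because $C \cup T = H$ where $T := [n] \setminus (C \cup C^*)$, by augmentation I can find an independent set $T_0 \subseteq T$ of size $d - |C|$ such that $(C \setminus \{a\}) \cup T_0$ is a basis of $H$. I would then set $Y := (C \setminus \{a, c\}) \cup T_0$, of size $d - 2$. The key observation is that the fundamental circuit of $a$ with respect to the basis $(C \setminus \{a\}) \cup T_0$ is $C$ itself (by uniqueness of fundamental circuits, since $C \setminus \{a\}$ lies in the basis and $C$ is a circuit containing $a$); hence by basis exchange $(C \setminus \{c\}) \cup T_0$ is also a basis of $H$. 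Since $b, d \notin H$, each of $\{a, b\} \cup Y$, $\{a, d\} \cup Y$, $\{b, c\} \cup Y$, $\{c, d\} \cup Y$ is then a basis of $\underline{\M}$, while $\{a, c\} \cup Y = C \cup T_0$ is dependent, forcing $\chi(a, c, Y) = 0$.

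With this setup in place, (B$2'''$) yields $\epsilon := \chi(a, b, Y) \cdot \chi(c, d, Y) \in \{\pm 1\}$, and one of two alternatives must hold. The alternative $\chi(b, d, Y) \cdot \chi(a, c, Y) = \epsilon$ fails because $\chi(a, c, Y) = 0$, leaving $\chi(c, b, Y) \cdot \chi(a, d, Y) = \epsilon$. In positive orientation, $\chi$ of any ordered $d$-tuple that is a basis equals the sign of the sorting permutation; counting inversions yields $\chi(a, b, Y) = (-1)^{q_a + q_b}$, $\chi(c, d, Y) = (-1)^{q_c + q_d}$, $\chi(a, d, Y) = (-1)^{q_a + q_d}$, and $\chi(c, b, Y) = (-1)^{1 + q_b + q_c}$, where $q_x := |Y \cap [1, x - 1]|$. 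Substituting, the required equality becomes $(-1)^{1 + q_a + q_b + q_c + q_d} = (-1)^{q_a + q_b + q_c + q_d}$, a contradiction. The main obstacle will be the construction of $Y$, specifically showing that $(C \setminus \{a\}) \cup T_0$ and $(C \setminus \{c\}) \cup T_0$ are simultaneously bases of $H$; once this symmetric construction is in hand, the rest is a routine inversion count using the alternating property of the chirotope.
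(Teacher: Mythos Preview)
Your proof is correct and very close to the paper's. Both arguments construct two bases of the hyperplane $H=[n]\setminus C^*$ related by exchanging the two crossing circuit elements, adjoin the cocircuit elements to obtain bases of $M$, and derive a sign contradiction via position counting. The only real difference is the tool invoked at the end: the paper appeals to the dual pivoting property (PV$^*$) from \cite{OM}, while you use axiom (B$2'''$) together with an explicit inversion count --- which is precisely the device the paper employs later in the proof of Theorem~\ref{thm:main}. Your version has the advantage of staying entirely within the chirotope axioms already stated in the paper; the paper's version is a couple of lines shorter by importing (PV$^*$). One cosmetic point: you overload $d$ as both the rank of $\M$ and the fourth crossing element; rename one before writing this up.
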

\begin{proof}[Proof of the forward direction]
 Suppose $M$ is the underlying matroid of a positively oriented matroid $\mathcal{M} = ([n], \chi)$.
 After reorienting, we can assume that $\chi (B) = 1$ for any basis $B$ of $M$.
 Let $C$ be a circuit of $M$ and $C^*$ be a cocircuit of $M$ such that $C \cap C^* = \emptyset$.
 If $C$ and $C^*$ are not non-crossing subsets of $[n]$
 then there exist $a,b \in C$ and $x,y \in C^*$ such that 
% $1 \leq a < x < b < y \leq n$ or $1 \leq y < a < x < b \leq n$.
 $1 \leq a < x < b < y \leq n$ or $1 \leq y < a < x < b \leq n$.
 
Consider the hyperplane $H = [n] - C^*$. Since $C$ is a circuit in the restriction $M|H$,
 there exist bases $A, B$ of $H$ such that $B = (A - a) \cup b$.
Let $r = |\{e \in A \ \vert \ e < x\}|$ and 
$s = |\{e \in B \ \vert \ e < x\}|$.  Clearly $r=s+1$.
Then $\chi(x,A)=(-1)^r = -(-1)^s = - \chi(x,B)$, where 
the elements of $A$ and $B$ are listed in increasing order.
 Similarly $\chi(y,A) = \chi(y,B)$. 
 However, this contradicts 
 the dual pivoting property (PV$^*$) of oriented matroids \cite[Definition 3.5.1]{OM},
which implies that $\chi(x,A)/\chi(y,A) = \chi(x,B)/\chi(y,B)$.
\end{proof}

Note that after Theorem \ref{thm:main} has been proved, 
the statement of Theorem \ref{thm:ilda} will also constitute a characterization of positroids.

\begin{remark}
 In \cite[Chapter 4, Definition 2.1]{daS}, da Silva studies the notion of ``circular matroids". 
 A rank $d$ matroid $M$ on $[n]$ is \emph{circular} if for any circuit $C$ of rank $r(C)<d$, 
 the flat $\overline{C}$ spanned by $C$ is a cyclic interval of $[n]$. As she observed, her Theorem \ref{thm:ilda} implies that every circular matroid is the underlying matroid of a positively oriented matroid. The converse statement was left open, and we now show that it is not true. 
 
 We will make use of the correspondence between positroids and (equivalence classes of) plabic graphs; for more information, see \cite{postnikov, ARW}.
 Consider the \emph{plabic graph} $G$ with \emph{perfect orientation} $\O$ depicted in Figure \ref{fig:plabic_notcircular}.
 Let $M$ be the corresponding 
 positroid on $[7]$. 
 Its bases are the 4-subsets $I \subseteq [7]$ for which there exists a flow from the source set $I_\O = \{1,2,4,5\}$ to $I$. 
 One easily verifies that $C = \{1,4,7\}$ is a circuit of rank $2$, and it is also a flat which is not a cyclic interval. Therefore $M$ is not circular. 
%
%
%while the sets $\{1,2,4,7\}$ and $\
%. However, since 
%
% In particular, $I_\O$ is a basis of $M$, and $M$ is a matroid of rank $4$.
% The fundamental circuit $C$ of $7$ over the basis $I_\O$ is the unique circuit contained in $I_\O \cup 7$.
% The circuit $C$ can be be described as the set of boundary vertices of $G$ that can be routed to $7$
% in the orientation $\O$, so $C = \{1,4,7\}$. The rank $r(C)$ of $C$ is equal to $2$. 
% However, since both $\{1,4\} \cup 2$ and $\{1,4\} \cup 5$ are independent sets,
% the closure $\overline{\{1,4,7\}} = \overline{C}$  contains neither $2$ nor $5$, and thus this flat is not 
% a cyclic interval of $[7]$.
  \begin{figure}[h]
  \begin{center}
   \includegraphics[scale=0.55]{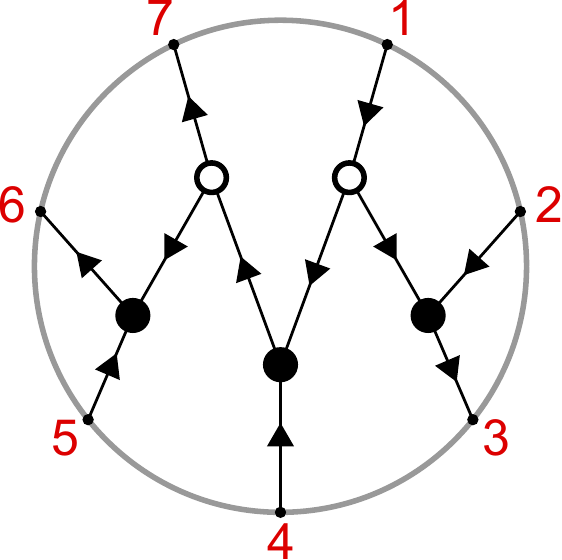}
   \caption{A perfect orientation of a plabic graph.}
   \label{fig:plabic_notcircular}
  \end{center}
 \end{figure}
\end{remark}

We now continue on our way toward proving Theorem \ref{thm:main}. 

\begin{proposition}\label{lem:POMdirectsum}
Let $\M$ be a positively oriented matroid on $[n]$ which is a direct sum 
of the connected oriented matroids $\M_1, \dots, \M_k$.  Let 
$S_1,\dots,S_k$ denote the ground sets of $\M_1,\dots,\M_k$.
Then $\M_1, \dots, \M_k$
are also positively oriented matroids, and 
$\{S_1,\dots,S_k\}$ is 
a non-crossing partition of $[n]$.
\end{proposition}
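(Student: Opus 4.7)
The plan is to split the proposition into its two assertions and dispatch each by reducing to results already in the excerpt.

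For the first assertion, I would observe that in the direct sum decomposition $\M = \M_1 \oplus \cdots \oplus \M_k$, restricting $\M$ to $S_i$ simply deletes every element outside $S_i$, and since the components are independent of one another in the chirotope, $\M|S_i = \M_i$. Then Lemma \ref{lemma:restriction} applies directly: every restriction of a positively oriented matroid (with the inherited order) is positively oriented. So each $\M_i$ is a positively oriented matroid on $S_i$, and this step is essentially a one-liner.

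For the non-crossing assertion, I would argue by contradiction using the forward direction of Theorem \ref{thm:ilda}. Suppose $\{S_1, \ldots, S_k\}$ is not non-crossing; then there are indices $i \ne j$ and elements $a, c \in S_i$ and $b, d \in S_j$ with $a < b < c < d$ in cyclic order on $[n]$. By Proposition \ref{prop:connected}, the underlying matroid $\underline{\M_i}$ is connected, and so is its dual $\underline{\M_i}^*$. Applying Proposition \ref{prop:equiv} inside $\underline{\M_i}$ produces a circuit $C$ of $\underline{\M_i}$ containing $a$ and $c$; since circuits of a direct sum are precisely the circuits of its summands, $C$ is also a circuit of $\underline{\M}$ and $C \subseteq S_i$. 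Applying Proposition \ref{prop:equiv} inside $\underline{\M_j}^*$ produces a cocircuit $C^*$ of $\underline{\M_j}$ containing $b$ and $d$; by the same direct-sum reasoning (together with Proposition \ref{prop:drc} to pass between duals and direct sums), $C^*$ is a cocircuit of $\underline{\M}$ with $C^* \subseteq S_j$. Since $S_i \cap S_j = \emptyset$ we have $C \cap C^* = \emptyset$, yet $a, c \in C$ and $b, d \in C^*$ with $a < b < c < d$ makes $C$ and $C^*$ crossing subsets of $[n]$, contradicting Theorem \ref{thm:ilda} applied to the positively oriented matroid $\M$.

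I do not expect a serious obstacle here; both halves reduce to machinery already established. The only small points to be careful with are (i) identifying $\M|S_i$ with $\M_i$ so that Lemma \ref{lemma:restriction} applies, and (ii) confirming that the circuit and cocircuit manufactured inside the components $\underline{\M_i}$ and $\underline{\M_j}$ really do remain a circuit and cocircuit of $\underline{\M}$, which is automatic from the behavior of (co)circuits under direct sums.
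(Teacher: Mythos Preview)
Your proposal is correct and follows essentially the same approach as the paper: both use Lemma~\ref{lemma:restriction} for the first assertion, and for the second both produce a circuit of $\underline{\M}$ inside $S_i$ (via connectedness of $\underline{\M_i}$ and Proposition~\ref{prop:equiv}) and a cocircuit inside $S_j$ (via connectedness of $\underline{\M_j}^*$), then invoke Theorem~\ref{thm:ilda}. The only cosmetic difference is that the paper phrases the non-crossing step directly rather than by contradiction.
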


\begin{proof}
Each oriented matroid $\M_i = \M|S_i$ is positively oriented by Lemma \ref{lemma:restriction}. We need to prove that $S=\{S_1,\dots,S_k\}$ is 
a non-crossing partition of $[n]$. Consider any two distinct parts $S_i$ and $S_j$ of $S$.
By Proposition \ref{prop:connected}, the matroids
$\underline{\M_i}$ and $\underline{\M_j}$ are connected. It follows from 
Proposition \ref{prop:equiv} that if $a,b \in S_i$ then there is a circuit $C$ of $\underline{\M_i}$ 
(and thus a circuit of $\underline{\M}$)
containing both $a$ and $b$. Similarly, since the matroid $\underline{\M_j}$ is connected its dual
matroid $\underline{\M_j}^*$ is connected too, so for any $c,d \in S_j$ there is a cocircuit $C^*$ of $\underline{\M_j}$ 
(and thus a cocircuit of $\underline{\M}$)
containing both $c$ and $d$. The circuit $C$ and the cocircuit $C^*$ are disjoint, so by Theorem \ref{thm:ilda} they are non-crossing subsets of $[n]$.
The elements $a,b,c,d$ were arbitrary, so it follows that $S_i$ and $S_j$ are non-crossing, as desired.
\end{proof}

\begin{lemma}\label{lem:mainconnected}
If Theorem \ref{thm:main} holds for connected positively oriented matroids,
then it holds for arbitrary positively oriented matroids.
\end{lemma}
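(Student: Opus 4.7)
The plan is to assemble the conclusion from the structural results already established in Sections \ref{sec:positroids} and \ref{sec:POMs}. Given a positively oriented matroid $\M$ on $[n]$, I first decompose it into its connected components $\M = \M_1 \oplus \cdots \oplus \M_k$, where each $\M_i$ is a connected oriented matroid on a ground set $S_i \subseteq [n]$. By Proposition \ref{lem:POMdirectsum}, each $\M_i$ is itself a positively oriented matroid (with respect to the induced order on $S_i$), and the collection $\{S_1, \dots, S_k\}$ forms a non-crossing partition of $[n]$.

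Next I invoke the hypothesis: since Theorem \ref{thm:main} is assumed for connected positively oriented matroids, each underlying matroid $\underline{\M_i}$ is a positroid on $S_i$. By Proposition \ref{prop:connected}, connectedness of $\M_i$ as an oriented matroid is equivalent to connectedness of $\underline{\M_i}$ as a matroid, so each $\underline{\M_i}$ is in fact a \emph{connected} positroid on $S_i$.

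Finally, the underlying matroid of $\M$ satisfies
\[
\underline{\M} \;=\; \underline{\M_1} \oplus \cdots \oplus \underline{\M_k},
\]
which is a direct sum of connected positroids indexed by a non-crossing partition of $[n]$. By Theorem \ref{th:pos-sum}, such a direct sum is again a positroid on $[n]$. Hence $\underline{\M}$ is a positroid, which proves Theorem \ref{thm:main} for $\M$.

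There is essentially no obstacle here: all the heavy lifting is packaged into Proposition \ref{lem:POMdirectsum} (which gives the crucial non-crossing property of the components) and Theorem \ref{th:pos-sum} (which provides the converse gluing statement for positroids). The role of this lemma is simply to record that these two ingredients, combined with Proposition \ref{prop:connected}, reduce the realizability problem to the connected case.
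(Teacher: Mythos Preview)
Your proof is correct and follows essentially the same approach as the paper: decompose into connected components, apply Proposition \ref{lem:POMdirectsum} to get positively oriented pieces over a non-crossing partition, use the hypothesis to make each underlying piece a positroid, and then invoke Theorem \ref{th:pos-sum} to glue. The only difference is that you make the appeal to Proposition \ref{prop:connected} (for connectedness of each $\underline{\M_i}$) explicit, whereas the paper leaves this implicit.
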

\begin{proof}
Let $\M$ be an arbitrary positively oriented matroid on $[n]$, and write it as
a direct sum of connected oriented matroids $\M_1,\dots, \M_k$ on the ground sets
$S_1,\dots,S_k$.
By Proposition \ref{lem:POMdirectsum}, each $\M_i$ is a positively oriented matroid, 
and $\{S_1,\dots,S_k\}$ is a non-crossing partition of $[n]$.
If Theorem \ref{thm:main} holds for connected positively oriented matroids then
each $\underline{\M_i}$ is a (connected) positroid.  But now by Theorem \ref{th:pos-sum},
their direct sum $\underline{\M}$ is a positroid.
\end{proof}

We now prove the main result of the paper.

\begin{proof}[Proof of Theorem \ref{thm:main}]
Let $\M$ be a positively oriented matroid of rank $d$ on $[n]$.
By Lemma \ref{lem:mainconnected}, we may assume that $\M$ is connected.
It follows from Proposition \ref{prop:connected} that its underlying matroid 
$M := \underline{\M}$ is connected.
By Lemma \ref{lem:dimpoly}, its matroid polytope $\Gamma_M$ has
dimension $\dim(\Gamma_M) = n-1$.
Moreover, any facet of 
$\Gamma_M$ is the matroid polytope of a matroid with exactly two connected components; so by Proposition \ref{prop:face}, 
it is the face of $\Gamma_M$ maximizing the dot product with a $0/1$-vector $w$.
Assume for the sake of contradiction that $M$ is not a positroid.  
It then follows from Proposition \ref{prop:facets} that
$\Gamma_M$ has a facet $F$ of the form 
$\sum_{i\in S} x_i = r_M(S)$, where $S \subseteq [n]$ is not a cyclic interval.
Each of the matroids $M|S$ and $M/S$ is connected.

Since $S$ is not a cyclic interval, we can find 
$i<j<k<\ell$ (in cyclic order) such that 
$i,k \in S$ and $j,\ell \notin S$.
In view of Proposition \ref{prop:equiv}, 
there exist bases $A \cup \{i\}$ and $A\cup \{k\}$ 
of $M|S$ exhibiting a basis exchange between $i$ and $k$.
Similarly, consider bases $B \cup \{j\}$ and $B \cup \{\ell\}$ of 
$M/S$ which exhibit a basis exchange between $j$ and $\ell$.
We now have the following bases of $M|S \oplus M/S$:
\begin{equation*}
A \cup B \cup \{i,j\},\quad A \cup B \cup \{i,\ell\},\quad
A \cup B \cup \{j,k\}, \quad A \cup B \cup \{k,\ell\}.
\end{equation*}
The corresponding vertices of $M$ are on $F$, so $w(e_{A \cup B \cup \{i,j\}}) = r(S)$. Then $A \cup B \cup \{i,k\}$ is \emph{not}
a basis of $M$, because 
%the value that the linear functional 
%$w$ takes on $e_{A \cup B \cup \{i,k\}}$ is too large.
%More specifically,
$w(e_{A \cup B \cup \{i,k\}}) = w(e_{A \cup B \cup \{i,j\}}) + 1 = r(S)+1$,
since $i,k\in S$ and $j \notin S$. %, but since $F$ is the facet defined by $w$, $w$ achieves its maximum value on bases of $M|S \oplus M/S$.

We now use Definition \ref{3term}.
Denote the elements of $A \cup B$ by $y_3, y_4, \dots, y_d$, 
where $y_3 < y_4 < \dots < y_d$.
We claim that 
\begin{equation}\label{claim}
\chi(i,j, y_3,\dots,y_d)  \chi(k,\ell, y_3, \dots, y_d) = 
\chi(j,k, y_3, \dots, y_d) 
\chi(i,\ell,y_3,\dots, y_d).  
\end{equation}
If we can prove the claim then we will contradict property (B$2'''$) of Definition \ref{3term}, because 
$\epsilon:= \chi(i,j, y_3,\dots,y_d)  \chi(k,\ell, y_3, \dots, y_d)$
is nonzero, but 
\begin{align*}
\chi(k,j, y_3,\dots,y_d)  \chi(i,\ell,y_3,\dots,y_d) &= 
- \chi(j,k, y_3,\dots,y_d) \chi(i,\ell,y_3,\dots,y_d)\\ &= 
- \chi(i,j, y_3,\dots,y_d)  \chi(k,\ell, y_3, \dots, y_d)\\ &= 
-\epsilon,
\end{align*}
and 
$\chi(i,k, y_3,\dots,y_d)  \chi(j,\ell,y_3,\dots, y_d) = 0$
since $A \cup B \cup \{i,k\}$ is not a basis.

Recall that if $I = \{ i_1 < \dots < i_d \}$,  we let 
$\chi(I) =  \chi(i_1, \dots, i_d)$.
Since $\M$ is  positively oriented, after reorienting we can assume
$\chi(I) \geq 0$ for all $d$-subsets $I$ of $[n]$.
We then have
\begin{equation*}
\chi(a,b,y_3,\dots,y_d) = (-1)^r
\chi(\{a\} \cup \{b\} \cup \{y_3, \dots, y_d\}) = (-1)^r,
\end{equation*}
where $r$ is the number of transpositions
needed to put the elements of the sequence
$(a,b,y_3,\dots,y_d)$ in increasing order.
Therefore to prove \eqref{claim}, we will compute $r$ for each 
term within it.

We know that $i<j<k<\ell$ in cyclic order.
In view of Lemma \ref{lemma:rotate}, we can assume that in fact $1 \leq i<j<k<\ell \leq n$. Define
\begin{align*}
c_1 &= | (A \cup B) \cap [1,i-1]|,\\
c_2 &= |(A \cup B) \cap [i+1,\dots,j-1]|,\\
c_3 & =|(A \cup B) \cap [j+1,\dots,k-1]|,\\
c_4 &= | (A \cup B) \cap [k+1,\dots,\ell-1]|.
\end{align*}
Then we have
\begin{align*}
\chi(i,j,y_3,\dots,y_d) & = (-1)^{c_1+c_1+c_2} = (-1)^{c_2}\\
\chi(k,\ell,y_3,\dots,y_d) & = (-1)^{2c_1+2c_2+2c_3+c_4} = (-1)^{c_4}\\
\chi(j,k,y_3,\dots,y_d) & = (-1)^{2c_1+2c_2+c_3} = (-1)^{c_3}\\
\chi(i,\ell,y_3,\dots,y_d) & = (-1)^{2c_1+c_2+c_3+c_4} = (-1)^{c_2+c_3+c_4}.
\end{align*}
Therefore
\[
\chi(i,j, y_3,\dots,y_d) \cdot \chi(k,\ell, y_3, \dots, y_d) = 
(-1)^{c_2+c_4},
\]
and also
\[
\chi(j,k, y_3, \dots, y_d) \cdot
\chi(i,\ell,y_3,\dots, y_d) = (-1)^{c_2+2c_3+c_4} = (-1)^{c_2+c_4},
\]
which proves the  claim.
\end{proof}

\section{The positive matroid Grassmannian is homeomorphic to a 
 ball}\label{sec:MacPhersonian}

In \cite{MacPherson}, MacPherson introduced the notion of \emph{combinatorial 
differential manifold}, a simplicial pseudomanifold with 
an additional discrete structure -- described in the language
of oriented matroids -- 
to model ``the tangent bundle."  He also developed the bundle theory
associated to combinatorial differential manifolds,
and showed that the classifying space of \emph{matroid bundles}
is the \emph{matroid Grassmannian} or \emph{MacPhersonian}.
The matroid Grassmannian therefore
plays the same role for 
matroid bundles as the ordinary Grassmannian plays for vector bundles.

After giving some preliminaries,
we will introduce the matroid Grassmannian and  
define its positive analogue.  The main result
of this section is 
that the positive matroid Grassmannian is homeomorphic
to a closed ball.

Given a poset, there is a natural topological object which one 
may associate to it, namely, the geometric realization of 
its order complex.
\begin{definition}
The \emph{order complex} $\|\mathcal{P}\|$ of a poset $\mathcal{P} = (P, \leq)$ is the simplicial
complex on the set $P$ whose simplices are the chains in $\mathcal{P}$.
\end{definition}
%For example, the vertices of the order complex are the elements of $P$,
%and the facets of the order complex are the maximal chains of $\mathcal{P}$.

\begin{definition}
A CW complex is \emph{regular} if the closure $\overline{c}$ of each cell
$c$ is homeomorphic to a closed ball, and 
$\overline{c} \setminus c$ is homeomorphic to a sphere.
\end{definition}

Given a cell complex $\mathcal{K}$, we define its \emph{face poset}
$\mathcal{F}(\mathcal{K})$ to be the set of closed cells
ordered by containment, and augmented by a least element $\hat{0}$.
In general, the order complex 
$\|\mathcal{F}(\mathcal{K})-\hat{0}\|$ does not reveal the topology
of $\mathcal{K}$.  However, the following result shows that regular
CW complexes are combinatorial objects in the sense that the incidence
relations of cells determine their topology.

\begin{proposition}\cite[Proposition 4.7.8]{Bjorner}
Let $\mathcal{K}$ be a regular CW complex.  Then 
$\mathcal{K}$ is homeomorphic to $\|\mathcal{F}(\mathcal{K})-\hat{0}\|$.
\end{proposition}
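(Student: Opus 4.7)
The plan is to build a homeomorphism $\Phi \colon \|\mathcal{F}(\mathcal{K}) - \hat{0}\| \to \mathcal{K}$ cell by cell, by induction on the dimension of closed cells. For each closed cell $\overline{c}$ of $\mathcal{K}$ I will produce a homeomorphism
\[
\phi_c \colon \|\mathcal{F}(\overline{c}) - \hat{0}\| \longrightarrow \overline{c}
\]
from the order complex of the face subposet of $\overline{c}$ onto $\overline{c}$, arranged so that $\phi_c$ restricts to $\phi_{c'}$ on $\|\mathcal{F}(\overline{c'}) - \hat{0}\|$ whenever $c' < c$. After fixing an interior point $p_c$ of each cell in advance, these local homeomorphisms glue into the desired global $\Phi$, since $\mathcal{K}$ carries the weak topology determined by its closed cells.

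The base case is immediate: if $\dim c = 0$, then $\overline{c}$ is a point and $\|\mathcal{F}(\overline{c}) - \hat{0}\|$ is a single vertex, which I send to $p_c$. For the inductive step, suppose $\phi_{c'}$ has been constructed compatibly for every $c' < c$. The proper faces of $c$ form a subcomplex of $\mathcal{K}$ whose underlying space is $\overline{c} \setminus c$, and the compatible maps $\phi_{c'}$ glue into a single homeomorphism
\[
\psi_c \colon \|\mathcal{F}(\overline{c}) - \hat{0} - c\| \longrightarrow \overline{c} \setminus c.
\]
Because $c$ is the unique maximum of $\mathcal{F}(\overline{c}) - \hat{0}$, the order complex $\|\mathcal{F}(\overline{c}) - \hat{0}\|$ is the topological cone over $\|\mathcal{F}(\overline{c}) - \hat{0} - c\|$ with apex the vertex labelled $c$. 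On the topological side, by regularity $\overline{c}$ is a closed ball with boundary sphere $\overline{c} \setminus c$; choosing any homeomorphism $\overline{c} \cong B^n$ that sends $p_c$ to the origin gives $\overline{c}$ the structure of a cone on $\overline{c} \setminus c$ with apex $p_c$. I then define $\phi_c$ by sending the vertex $c$ to $p_c$ and extending $\psi_c$ radially along cone rays.

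The main obstacle is exactly this radial extension step, which is why both halves of the regularity hypothesis are essential. If $\overline{c} \setminus c$ were not a sphere, there would be no cone decomposition of $\overline{c}$ matching the combinatorial cone structure of the order complex; if $\overline{c}$ were not a ball, no such decomposition could exist at all. It is worth emphasizing that no deep topological input such as Schoenflies is required here: I only use that \emph{some} homeomorphism $\overline{c} \cong B^n$ exists, which is the content of regularity, and then transport the cone structure back. A minor verification is that this ball homeomorphism restricts to a sphere homeomorphism on $\overline{c} \setminus c$, which follows from invariance of domain since the open cell $c$ must map into the interior of $B^n$. Continuity of the assembled map $\Phi$ and of its inverse then follows from the weak topology on $\mathcal{K}$ together with the compatibility of the $\phi_c$ across shared faces, completing the proof.
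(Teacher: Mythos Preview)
The paper does not actually prove this proposition; it simply quotes it as \cite[Proposition 4.7.8]{Bjorner} and uses it as a black box in Section~\ref{sec:MacPhersonian}. So there is no ``paper's own proof'' to compare against.

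That said, your argument is the standard one (essentially what appears in Bj\"orner's handbook chapter and in Lundell--Weingram): build the homeomorphism by induction on skeleta, using that the order complex below a top element is a cone and that regularity makes each closed cell a cone on its boundary sphere. The subtlety you flag---that under the paper's phrasing of regularity one must check that a ball homeomorphism $\overline{c}\cong B^n$ carries $\overline{c}\setminus c$ onto $\partial B^n$---is real, and your invariance-of-domain remark disposes of it. One small point worth making explicit in the gluing step is that in a regular CW complex the boundary $\overline{c}\setminus c$ is itself a subcomplex (so the inductive hypothesis applies to it as a CW complex in its own right); this is automatic once characteristic maps are homeomorphisms, but is the fact that makes ``the compatible maps $\phi_{c'}$ glue into a single homeomorphism $\psi_c$'' go through cleanly.
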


There is a natural partial order on oriented matroids called
\emph{specialization}.

\begin{definition}
Suppose that $\M=(E,\chi)$ and $\M'=(E,\chi')$ 
are two rank $k$ oriented matroids on $E$.
We say that $\M'$ is a \emph{specialization} 
of $\M$, denoted $\M \leadsto \M'$, if  (after replacing $\chi$ with $-\chi$ if necessary)
we have that 
$$\chi(y_1,\dots,y_k)=\chi'(y_1,\dots,y_k) \text{ whenever }
\chi'(y_1,\dots,y_k) \neq 0.$$
\end{definition}

\begin{definition}
The \emph{matroid Grassmannian} or \emph{MacPhersonian}
$\MacP(k,n)$ of rank $k$ on $[n]$
is the poset of rank $k$ oriented matroids
on the set $[n]$, 
where %the order is given by 
$\M \geq \M'$ if and only if 
$\M \leadsto \M'$.
\end{definition}

One often identifies $\MacP(k,n)$ with its 
order complex.  When we speak of the topology 
of $\MacP(k,n)$, we mean the topology of 
(the geometric realization of) the order complex of 
$\MacP(k,n)$,  denoted
$\|\MacP(k,n)\|$.

MacPherson \cite{MacPherson} pointed out that
$\|\MacP(k,n)\|$ is 
homeomorphic to the real Grassmannian 
$\Gr(k,n)$ if $k$ equals 
$1, 2, n-2$, or $n-1$, but that 
``otherwise,
the topology of the matroid Grassmannian is mostly a mystery."
As mentioned in the introduction of this paper, Anderson \cite{Anderson},
and Anderson and Davis \cite{AD} made some progress on this question,
obtaining results on the homotopy groups and cohomology
of the matroid Grassmannian.
Shortly thereafter, the paper \cite{Biss} put forward a proof that
the matroid Grassmannian $\|\MacP(k,n)\|$ is homotopy equivalent
to the real Grassmannian $\Gr(k,n)$.  Unfortunately, a serious
mistake was found in the proof \cite{BissErratum}, and
it is still open whether
$\MacP(k,n)$ is homotopy equivalent to $\Gr(k,n)$.

We now introduce a positive counterpart $\MacP^+(k,n)$
of the matroid Grassmannian.
This space turns out to be more tractable than $\MacP(k,n)$;
we can 
completely describe its homeomorphism type.

\begin{definition}
The \emph{positive matroid Grassmannian} or \emph{positive MacPhersonian}
$\MacP^+(k,n)$ of rank $k$ on $[n]$
is the poset of rank $k$ positively oriented matroids
on the set $[n]$, 
where %the order is given by 
$\M \geq \M'$ if and only if 
$\M \leadsto \M'$.  
\end{definition}

For convenience, we usually augment $\MacP^+(k,n)$ by adding 
a least element $\hat{0}$.
Our main theorem on the topology of $\MacP^+(k,n)$ is the following.

\begin{theorem}\label{thm1}
$\MacP^+(k,n)$ 
is the face poset of a regular CW complex homeomorphic
to a ball. It follows that:
\begin{itemize}
\item $\|\MacP^+(k,n)\|$
is homeomorphic to a ball.
\item  
For each $\M \in \MacP^+(k,n)$,
the closed and open intervals
$\|[\hat{0},\M]\|$ and 
$\|(\hat{0},\M)\|$ are homeomorphic to 
a ball and a sphere, respectively.
\item 
$\MacP^+(k,n)$ is Eulerian.
\end{itemize}
\end{theorem}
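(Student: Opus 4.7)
The plan is to identify $\MacP^+(k,n)\cup\{\hat{0}\}$ with the augmented face poset of a specific regular CW complex, namely the positive Grassmannian $\Gr^+(k,n)$ equipped with its positroid stratification, and then to read off the topology using the Björner proposition cited above.

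First, I would use Theorem \ref{thm:main} to pin down the cells. For any totally nonnegative $d\times n$ matrix $A$, each Plücker coordinate $\Delta_I(A)$ is either zero or positive, so the chirotope $\chi_A$ takes values in $\{0,1\}$; thus every realization of a point of $\Gr^+(k,n)$ carries a positively oriented matroid structure. Postnikov's positroid stratification of $\Gr^+(k,n)$ partitions it according to the vanishing set of the Plücker coordinates. Because Theorem \ref{thm:main} furnishes a realization in $\Gr^+(k,n)$ for every positively oriented matroid, the strata are in bijection with the elements of $\MacP^+(k,n)$, and \emph{no} positively oriented matroid is left out.

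Next, I would invoke the third author's results from \cite{Shelling}, which assert that the positroid stratification endows $\Gr^+(k,n)$ with a regular CW structure and that $\Gr^+(k,n)$ itself is homeomorphic to a closed ball. One must check that the closure order on positroid cells matches specialization: if a sequence of totally nonnegative matrices converges, a previously positive $\Delta_I$ can become zero but an already-zero $\Delta_I$ stays zero, and no sign changes occur; this is precisely the definition of $\M\leadsto\M'$ when both chirotopes are chosen to take values in $\{0,1\}$. Consequently, the face poset of this regular CW complex, augmented by $\hat{0}$, coincides with $\MacP^+(k,n)\cup\{\hat{0}\}$.

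The conclusions then follow mechanically from the Björner proposition. Applied to $\Gr^+(k,n)$, it yields
\[
\|\MacP^+(k,n)\|\ \cong\ \Gr^+(k,n),
\]
which is a closed ball. Applied to the closed cell $\overline{c}_\M$ corresponding to $\M$, the same proposition identifies $\|[\hat{0},\M]\|$ with $\overline{c}_\M$ (a ball) and $\|(\hat{0},\M)\|$ with the boundary sphere $\overline{c}_\M\setminus c_\M$. The Eulerian property is then immediate, since every open interval in the face poset of a regular CW sphere has the reduced Euler characteristic of a sphere of the appropriate dimension. The main obstacle is the second step: making sure that closure of positroid cells really is specialization of positively oriented matroids, because the surjectivity of ``cells $\to$ positively oriented matroids'' requires Theorem \ref{thm:main}, while the order-matching requires a short but delicate analysis of degenerations of totally nonnegative Plücker vectors; once both are in hand, the cited regularity result of \cite{Shelling} does all the remaining topological work.
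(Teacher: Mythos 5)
Your overall strategy coincides with the paper's: use Theorem \ref{thm:main} to put the elements of $\MacP^+(k,n)$ in bijection with the positroid cells of $\Gr^+(k,n)$, match specialization with the closure order to get the poset isomorphism $\MacP^+(k,n)\cong Q(k,n)$ (this is Proposition \ref{equalposets}), and then hand the topology over to the third author's results. However, there is a genuine gap in how you invoke \cite{Shelling}. You assert that \cite{Shelling} shows the positroid stratification endows $\Gr^+(k,n)$ with a \emph{regular} CW structure and that $\Gr^+(k,n)$ is itself homeomorphic to a closed ball, and you then apply Proposition 4.7.8 of \cite{Bjorner} to the space $\Gr^+(k,n)$. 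Neither of those two statements is proved in \cite{Shelling}, and both were open problems at the time this paper was written. What \cite{Shelling} proves is the purely combinatorial Theorem \ref{thm:shelling}: the poset $Q(k,n)$ is graded, thin, and EL-shellable. The topological conclusion then comes from a \emph{different} theorem of Bj\"orner (on thin, shellable, graded posets), which manufactures an \emph{abstract} regular CW complex homeomorphic to a ball whose face poset is $Q(k,n)$; the order complex statements and the Eulerian property are read off from that abstract complex. The direction of inference matters: the paper goes from the poset to an abstract regular CW ball, whereas you go from the space $\Gr^+(k,n)$ to its face poset, which presupposes exactly the regularity (and the ball homeomorphism) that was not available. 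Separately, your check that closure of cells matches specialization only establishes one direction: a limit of totally nonnegative matrices can only kill Pl\"ucker coordinates, so $S^{tnn}_{\B'}\subset\overline{S^{tnn}_{\B}}$ forces $\B'\subseteq\B$. The converse --- that every positroid cell indexed by $\B'\subseteq\B$ actually lies in the closure --- is the substantive content of Theorem \ref{posetclosures} and needs to be cited, not derived from the degeneration argument.
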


The positive analogue of the real Grassmannian is the 
\emph{positive Grassmannian} (also called the 
\emph{totally non-negative Grassmannian}).  
The positive Grassmannian is an example of a 
positive flag variety, as 
introduced  by Lusztig in his theory of total positivity
for real flag manifolds \cite{Lusztig}, and its combinatorics
was beautifully developed by Postnikov \cite{postnikov}.
The positive Grassmannian has recently received a great
deal of attention because of its connection with 
\emph{scattering amplitudes} \cite{AH}.

\begin{definition}
The \emph{positive Grassmannian} $\Gr^+(k,n)$ is the 
subset of the real Grassmannian where all Pl\"ucker coordinates
are non-negative.
% (equivalently, all Pl\"ucker coordinates are non-positive).
\end{definition}

While it remains unknown whether 
$\|\MacP(k,n)\|$ is homotopy-equivalent to $\Gr(k,n)$, 
the positive analogue of that statement is true.

\begin{theorem}\label{thm2}
The positive matroid Grassmannian $\|\MacP^+(k,n)\|$ and 
the positive Grassmannian $\Gr^+(k,n)$ are homotopy-equivalent; 
more specifically, both 
are contractible, with boundaries homotopy-equivalent to a sphere.
\end{theorem}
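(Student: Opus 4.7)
The statement for $\|\MacP^+(k,n)\|$ is immediate from Theorem \ref{thm1}: a closed ball is contractible and its boundary is a sphere. Since any two contractible spaces are homotopy-equivalent, once I establish that $\Gr^+(k,n)$ is contractible, the stated homotopy equivalence between the two spaces is automatic. All the real work therefore concerns $\Gr^+(k,n)$ itself: its contractibility and the homotopy type of its boundary.

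My starting point is Postnikov's stratification of $\Gr^+(k,n)$ into \emph{positroid cells} $\Pi_M^+$, one for each rank $k$ positroid $M$ on $[n]$, each homeomorphic to an open ball. By Theorem \ref{thm:main}, positroids on $[n]$ are precisely the underlying matroids of the elements of $\MacP^+(k,n)$, so the face poset of this stratification (augmented by $\hat{0}$) becomes canonically isomorphic to $\MacP^+(k,n)$. Under this identification the unique maximum corresponds to the open top cell (where every Pl\"ucker coordinate is strictly positive), and $\partial \Gr^+(k,n)$ corresponds to the union of all other positroid cells.

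Contractibility of $\Gr^+(k,n)$ can then be obtained from Lusztig's general contractibility theorem for the totally nonnegative part of a partial flag variety; alternatively I could give a direct argument by extending the obvious deformation retract on the top cell (parametrized by $\RR_{>0}^{k(n-k)}$) to its closure. Either route gives the first half of the claim for $\Gr^+(k,n)$.

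The main obstacle is to show that $\partial \Gr^+(k,n)$ is homotopy-equivalent to a sphere. By Theorem \ref{thm1}, the boundary of the ball $\|\MacP^+(k,n)\|$ is homeomorphic to the geometric realization of the proper part of $\MacP^+(k,n)$ lying below its maximum, which is a sphere. To transfer this conclusion to $\partial \Gr^+(k,n)$, I would combine the face-poset identification above with the shellability of the positroid stratification proved in \cite{Shelling}. If the positroid CW decomposition of $\Gr^+(k,n)$ were known to be regular, the recognition theorem for regular CW complexes quoted in this section would immediately supply a homeomorphism; since that regularity was still open at the time of writing, one must instead argue homotopically, for example by combining shellability with a nerve- or discrete-Morse-theoretic comparison, so as to conclude that $\partial \Gr^+(k,n)$ has the homotopy type of the geometric realization of this subposet, and hence of a sphere.
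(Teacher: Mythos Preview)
Your treatment of $\|\MacP^+(k,n)\|$ is fine and matches the paper: once Theorem \ref{thm1} is in hand, a closed ball is contractible with spherical boundary, and two contractible spaces are homotopy-equivalent. Your argument for the contractibility of $\Gr^+(k,n)$ via Lusztig is also acceptable.

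The gap is in your handling of $\partial\Gr^+(k,n)$. You correctly observe that the positroid CW decomposition is not known to be regular, so one cannot simply read off the topology of $\partial\Gr^+(k,n)$ from its face poset. But your proposed workaround---``combining shellability with a nerve- or discrete-Morse-theoretic comparison''---is not a proof; it is a hope. Shellability of the poset $Q(k,n)$ tells you about the order complex $\|Q(k,n)\|$, not directly about the actual subspace $\partial\Gr^+(k,n)\subset\Gr(k,n)$, and bridging that gap is exactly the nontrivial content you are trying to avoid.

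The paper does not attempt this bridge here. It simply invokes Theorem \ref{thm:RW}, quoted from \cite{RW}, which already establishes that $\Gr^+(k,n)$ is contractible and that its boundary (indeed the boundary of every cell closure) is homotopy-equivalent to a sphere. With that result in hand, Theorem \ref{thm2} is immediate: Proposition \ref{equalposets} plus Theorem \ref{thm1} handle $\|\MacP^+(k,n)\|$, Theorem \ref{thm:RW} handles $\Gr^+(k,n)$, and contractibility of both gives the homotopy equivalence. You should cite Theorem \ref{thm:RW} rather than sketch an independent argument that you do not complete.
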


Before proving Theorems \ref{thm1} and \ref{thm2}, we review some 
results on the positive Grassmannian \cite{postnikov, Shelling, RW}.

Let $\B \subseteq \binom{[n]}{k}$ be a collection of $k$-element
subsets of $[n]$.  
We define
$$S_{\B}^{tnn} = \{A \in \Gr^+(k,n) \ \vert \ \Delta_I(A)>0
\text{ if and only if }I\in \B\}.$$

\begin{theorem} \cite{postnikov}\label{posetclosures}
Each subset $S_{\B}^{tnn}$ is either empty or a 
cell.  The positive Grassmannian $\Gr^+(k,n)$
is therefore a disjoint union of cells, where 
$S_{\B'}^{tnn} \subset \overline{S_{\B}^{tnn}}$ if and only if 
$\B' \subseteq \B$.
\end{theorem}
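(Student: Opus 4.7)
The plan is to reduce this to Postnikov's explicit combinatorial parametrization of the strata $S_{\B}^{tnn}$ by reduced plabic graphs (or equivalently Le-diagrams). First, I would observe that if $S_{\B}^{tnn}$ is nonempty then $\B$ must be the set of bases of a positroid $M$: any $A \in S_{\B}^{tnn}$ is totally nonnegative and realizes the matroid $M(A) = ([n],\B)$. Conversely, every positroid is realizable, so the nonempty strata are indexed exactly by positroids of rank $k$ on $[n]$. This turns the problem into a combinatorial one about an already-classified family of matroids.

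Next, I would prove that each nonempty $S_{\B}^{tnn}$ is a cell. Given a positroid $M$ with bases $\B$, associate to $M$ a reduced plabic graph $G$ (equivalently a Le-diagram $D$); let $d$ be the number of internal faces of $G$. Define the \emph{boundary measurement map} $\Phi_G\colon \RR_{>0}^{d} \to \Gr^+(k,n)$, which sends a positive edge-weighting of $G$ (modulo the gauge action at interior vertices) to the $k \times n$ matrix whose Pl\"ucker coordinate $\Delta_I$ is the positive-coefficient sum of flow weights from a fixed source set to $I$. The key technical step is to show that $\Phi_G$ is a homeomorphism onto $S_{\B}^{tnn}$. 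Surjectivity is proved by a ``source-to-sink'' algorithm that reconstructs edge weights from the Pl\"ucker coordinates of any $A \in S_{\B}^{tnn}$, and injectivity is a gauge-fixing argument: the weights at each face are determined by ratios of flow polynomials, which are in turn ratios of Pl\"ucker coordinates of $A$. Since $\RR_{>0}^{d}$ is an open cell of dimension $d$, this gives $S_{\B}^{tnn}$ a cell structure, and also computes its dimension as the number of faces of any reduced plabic graph for $M$.

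For the closure relations I would treat the two directions separately. The easy direction: if $A_m \in S_{\B}^{tnn}$ converges to $A \in S_{\B'}^{tnn}$, then continuity of $\Delta_I$ forces $\Delta_I(A) \geq 0$ and forces $\Delta_I(A_m) > 0$ eventually whenever $\Delta_I(A) > 0$; hence $\B' \subseteq \B$. The hard direction requires showing that whenever $\B' \subseteq \B$ with $\B'$ the bases of a positroid $M'$, there exist points of $S_{\B}^{tnn}$ approaching an arbitrary given point of $S_{\B'}^{tnn}$. Here I would degenerate the plabic-graph parametrization: starting from $G$, identify a sequence of edge-weight degenerations (sending certain weights to $0$ or $\infty$) that corresponds to face-contractions or box-deletions on the Le-diagram producing a reduced plabic graph for $M'$. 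The combinatorial content is a dictionary: the partial order on positroids by $\B' \subseteq \B$ agrees with the partial order induced on Le-diagrams by box removal, and each such removal corresponds to a continuous one-parameter degeneration of boundary measurements. Combining these degenerations with the surjectivity of $\Phi_{G'}$ onto $S_{\B'}^{tnn}$ yields the desired limiting sequences.

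The main obstacle is the closure direction. Even granting the homeomorphism $\Phi_G\colon\RR_{>0}^d \xrightarrow{\sim} S_{\B}^{tnn}$, one must relate the parametrizations of the various cells coherently and confirm that every positroid $M'$ with $\B(M') \subseteq \B(M)$ really is realized in the limit as edge weights of a plabic graph for $M$ degenerate; this uses the full dictionary between positroids, decorated permutations, and plabic graphs (in particular the fact that reduced plabic graphs for a positroid are connected by local moves, so the degeneration can be performed in any of them), together with careful bookkeeping to match face degenerations with the combinatorial operations taking $M$ to $M'$.
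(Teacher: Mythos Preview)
The paper does not give a proof of this theorem; it is stated with a citation to Postnikov and used as a black box in Section~\ref{sec:MacPhersonian}. So there is no ``paper's own proof'' to compare your proposal against.

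That said, your outline is a faithful sketch of Postnikov's argument: identify the nonempty strata with positroids, parametrize each stratum by the boundary measurement map $\Phi_G$ from a reduced plabic graph (equivalently a $\Le$-diagram), and deduce the closure relations from edge-weight degenerations. The easy direction of the closure statement is exactly as you describe, via continuity of Pl\"ucker coordinates. The hard direction is where the substance lies, and you are right to flag it as the main obstacle: showing that every positroid $M'$ with $\B(M')\subseteq\B(M)$ actually occurs in $\overline{S_{\B}^{tnn}}$ is not immediate from the parametrization of a single cell. It requires the full dictionary matching the containment order on positroids to the combinatorial covering relations on decorated permutations (or $\Le$-diagrams), together with the compatibility of the boundary measurement maps under the corresponding degenerations. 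This is what Postnikov carries out, and your sketch of ``box removals on the $\Le$-diagram correspond to one-parameter degenerations of boundary measurements'' is the correct picture, though turning it into a proof is a nontrivial amount of bookkeeping.
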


Let $Q(k,n)$ denote the poset of cells
of $\Gr^+(k,n)$, ordered by containment of closures, 
and augmented by a least element $\hat{0}$.

\begin{theorem}\cite{Shelling}\label{thm:shelling}
The poset $Q(k,n)$ is graded, thin, and EL-shellable.
It follows that $Q(k,n)$  is the face
poset of a regular CW complex homeomorphic to a ball, 
and that it is Eulerian.
\end{theorem}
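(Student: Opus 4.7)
The plan is to establish the three combinatorial properties of $Q(k,n)$ (graded, thin, EL-shellable) inside a convenient combinatorial model, and then invoke standard poset-topology theorems to deduce the regular-CW-ball and Eulerian conclusions.

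First, I would transport the problem to a purely combinatorial setting via Postnikov's parametrization \cite{postnikov} of the cells $S_\B^{tnn}$ of $\Gr^+(k,n)$. The cells are in bijection with decorated permutations of $[n]$ having exactly $k$ weak anti-exceedances, and equivalently with $\Le$-diagrams fitting inside the $k\times(n-k)$ rectangle. Under this bijection, the closure order on cells becomes an explicit combinatorial order on $\Le$-diagrams (or decorated permutations), and the dimension of $S_\B^{tnn}$ is the number of boxes of the associated $\Le$-diagram. Gradedness of $Q(k,n)$ with rank function equal to cell dimension then reduces to showing that every covering relation corresponds to an elementary ``box move'' that changes the number of boxes by exactly one, a direct combinatorial check.

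Next, thinness is verified by a case analysis at the level of $\Le$-diagrams: a length-two interval $[x,z]$ corresponds to a pair of elementary moves, and one checks that the two moves can always be performed in exactly two different orders, producing precisely two intermediate diagrams. For EL-shellability, I would construct an edge labeling of the Hasse diagram by assigning each covering relation a label that records which elementary move (equivalently, which transposition in the associated permutation) was applied, and then verify by induction on the rank that every closed interval contains a unique rising maximal chain and that this chain is lexicographically smallest. The construction is modeled on the EL-labelings of Bruhat order due to Edelman and Bj\"orner--Wachs, adapted to the parabolic/positroid setting; this adaptation is where the main work lies, since one must exploit the non-crossing structure of positroids (Theorem \ref{th:pos-sum}) to rule out spurious rising chains and to ensure that uniqueness holds in every subinterval. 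This step is the main obstacle.

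With graded, thin, and EL-shellable in hand, the three ``it follows that'' assertions follow from standard poset-topology results. Shellability of $\|Q(k,n)-\hat{0}\|$, combined with thinness, gives that this order complex is a PL sphere, and hence by Bj\"orner's criterion (see \cite[Proposition 4.7.8]{Bjorner} and its strengthening for bounded thin shellable posets) $Q(k,n)$ is the face poset of a regular CW complex homeomorphic to a ball. Eulerianness is then immediate from thinness together with the M\"obius-function computation available for shellable posets. Once these three properties are in place, no further analysis of the topology of $\Gr^+(k,n)$ itself is needed for Theorem \ref{thm:shelling}.
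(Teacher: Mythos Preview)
The paper does not prove Theorem~\ref{thm:shelling}; it is quoted as a result of \cite{Shelling} and used as a black box in Section~\ref{sec:MacPhersonian}. There is therefore no proof in the present paper to compare your proposal against.

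That said, your outline is a fair high-level description of the strategy actually carried out in \cite{Shelling}, with two caveats. First, the EL-labeling constructed in \cite{Shelling} is built from a reflection order on the associated Weyl group, adapted to the parabolic quotient and to the decorated-permutation model; it does not use the non-crossing partition structure of Theorem~\ref{th:pos-sum}, and invoking that theorem would not by itself give the required uniqueness of rising chains in every interval. Second, your topological deduction contains a slip: $Q(k,n)$ has a maximum element (the top cell), so $\|Q(k,n)-\hat 0\|$ is a cone and hence a ball, not a sphere; it is the order complex of the open interval $(\hat 0,\hat 1)$ that is a PL sphere. With those corrections, the inference ``graded, thin, EL-shellable $\Rightarrow$ face poset of a regular CW ball $\Rightarrow$ Eulerian'' is indeed the standard Bj\"orner argument you cite.
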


\begin{theorem}\cite{RW}\label{thm:RW}
The positive Grassmannian
$\Gr^+(k,n)$ is {con\-tract\-i\-ble}, and its boundary is homotopy-equivalent
to a sphere.  Moreover, the closure of every cell is contractible,
and the boundary of every cell is homotopy-equivalent to a sphere.
\end{theorem}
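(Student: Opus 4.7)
The plan is to leverage Postnikov's cell decomposition of $\Gr^+(k,n)$ (Theorem \ref{posetclosures}) together with his explicit parametrizations of the cells, plus the shellability result of Theorem \ref{thm:shelling}. I would proceed by simultaneous induction on the dimension of a cell, establishing in tandem that each closed positroid cell $\overline{S_{\B}^{tnn}}$ is contractible and that its boundary is homotopy-equivalent to a sphere. The statements about $\Gr^+(k,n)$ itself then follow by specializing to the top-dimensional cell (the totally positive part), whose closure is the whole space.

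For contractibility of a closed cell, I would use Postnikov's boundary-measurement parametrization: for a reduced plabic graph $G$ representing $\B$, or equivalently from the \Le-diagram of $\B$, there is a homeomorphism $\phi_{\B}\colon\RR_{>0}^{d(\B)}\to S_{\B}^{tnn}$ that extends continuously to a surjection $\bar\phi_{\B}\colon\RR_{\geq 0}^{d(\B)}\to\overline{S_{\B}^{tnn}}$ whose image on each boundary face is a closure of a strictly smaller positroid cell. The straight-line homotopy $(t_1,\ldots,t_{d(\B)})\mapsto((1-s)t_1,\ldots,(1-s)t_{d(\B)})$ on the domain, once shown to descend continuously to $\overline{S_{\B}^{tnn}}$, yields a deformation retract of the closed cell onto $\bar\phi_{\B}(\0)$, which sits in the closure of a strictly smaller cell. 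Iterating finitely often contracts the closed cell to a point.

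The statement that $\partial\overline{S_{\B}^{tnn}}$ is homotopy-equivalent to a sphere of dimension $d(\B)-1$ follows by combining the inductive hypothesis with Theorem \ref{thm:shelling}: the open interval $(\hat 0,\B)\subset Q(k,n)$ is graded, thin, and EL-shellable, so its order complex is a PL-sphere of the correct dimension. By the inductive hypothesis each closed subcell contributing to $\partial\overline{S_{\B}^{tnn}}$ is contractible. A nerve-lemma/Mayer--Vietoris argument, combined with the general principle that the order complex of the face poset of a regular CW complex recovers its homotopy type (compare the proposition cited in the excerpt), then upgrades the combinatorial statement to the topological one: $\partial\overline{S_{\B}^{tnn}}$ is homotopy-equivalent to a sphere.

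The main obstacle will be the continuity step in the contraction argument: the extension $\bar\phi_{\B}$ is typically not injective on $\partial\RR_{\geq 0}^{d(\B)}$, so one has to check that the naive radial contraction on the parameter cube descends to a well-defined continuous self-map of $\overline{S_{\B}^{tnn}}$. This requires a careful understanding of how the faces of the parametrizing cube collapse onto the combinatorial boundary of the cell, using Postnikov's description of which edge-weight limits factor through moves on plabic graphs or through contractions of the \Le-diagram. Fixing a canonical choice of parametrization (e.g.\ the reduced \Le-diagram of $\B$) makes this analysis tractable, but verifying that the induced contraction is compatible with cell closures, and carrying the inductive step through all ranks of $Q(k,n)$ simultaneously, is the technical heart of the argument.
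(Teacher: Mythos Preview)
The paper does not prove this theorem; it is quoted from Rietsch--Williams \cite{RW}, where it is established for arbitrary $(G/P)_{\geq 0}$ using discrete Morse theory (see also the Remark immediately following the statement). Their argument exhibits explicit attaching maps for the cells via Marsh--Rietsch parametrizations and then produces an acyclic matching on the resulting CW structure of each closed cell, showing that it collapses to a point; the sphere statement for the boundary is obtained by the same technique.

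Your approach is different, and the contractibility half has a genuine gap. You want the radial contraction on $\RR_{\geq 0}^{d(\B)}$ to descend through $\bar\phi_\B$, but scaling by $(1-s)$ keeps boundary points on the boundary (a coordinate that is zero stays zero), so descent amounts to asking that the boundary identifications made by $\bar\phi_\B$ be scale-invariant. They are not, in general: the boundary-measurement map is built from inhomogeneous polynomials in the edge weights, and the gluings among faces of the orthant reflect non-homogeneous relations. You correctly flag this as the main obstacle, but you do not resolve it, and without it the argument has no content. (The remark about ``iterating finitely often'' is also puzzling: the radial contraction already terminates at the single point $\bar\phi_\B(\mathbf 0)$, so no iteration would be needed if the descent actually worked.) Your boundary-sphere argument via the nerve lemma and Theorem~\ref{thm:shelling} is more plausible in outline, but it requires the contractibility statement as input, and it also requires finite intersections of closed cells to be contractible in order for the cover to be good; this is not automatic, since such intersections are subcomplexes rather than single closed cells (the relevant intervals in $Q(k,n)$ are not meet-semilattices in general).
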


\begin{remark}
In fact, Theorems \ref{thm:shelling} and \ref{thm:RW} were proved
more generally in \cite{Shelling, RW}
for real flag varieties $G/P$.
\end{remark}

We have the following result.

\begin{proposition}\label{equalposets}
For any $k \leq n$, 
$\MacP^+(k,n)$ and $Q(k,n)$ are isomorphic as posets.
\end{proposition}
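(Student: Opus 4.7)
The plan is to construct an explicit order-preserving bijection between $\MacP^+(k,n)$ and $Q(k,n)$, using Theorem \ref{thm:main} to match positively oriented matroids with positroids, and then to show that the two partial orders both reduce to inclusion of base sets. The key observation is that both posets are naturally indexed by the same combinatorial data — the collection $\mathcal{B} \subseteq \binom{[n]}{k}$ of ``positive bases'' of a rank $k$ positroid on $[n]$ — so the bijection will send a positively oriented matroid $\mathcal{M}$ to the cell $S^{tnn}_{\mathcal{B}(\underline{\mathcal{M}})}$.

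First I would define $\phi : \MacP^+(k,n) \to Q(k,n)$ by $\phi(\mathcal{M}) := S^{tnn}_{\mathcal{B}(\underline{\mathcal{M}})}$, where $\mathcal{B}(\underline{\mathcal{M}})$ denotes the base set of the underlying matroid. By Theorem \ref{thm:main}, $\underline{\mathcal{M}}$ is a positroid, so it admits a totally non-negative realization $A$; then $A \in S^{tnn}_{\mathcal{B}(\underline{\mathcal{M}})}$ witnesses that this cell is non-empty, and $\phi$ is well-defined. Conversely, I would define $\psi: Q(k,n) \to \MacP^+(k,n)$ by sending a non-empty cell $S^{tnn}_{\mathcal{B}}$ to the oriented matroid $([n], \chi_A)$ for any $A \in S^{tnn}_{\mathcal{B}}$. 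This is independent of the choice of $A$ because the signs of the Pl\"ucker coordinates are constant on each cell; moreover, $\chi_A$ takes values in $\{0, 1\}$, so the resulting oriented matroid is positively oriented with base set $\mathcal{B}$. The compositions $\psi \circ \phi$ and $\phi \circ \psi$ are the identity by construction.

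Second I would check that $\phi$ is order-preserving in both directions. For the cell poset, Theorem \ref{posetclosures} immediately gives $S^{tnn}_{\mathcal{B}'} \leq S^{tnn}_{\mathcal{B}}$ if and only if $\mathcal{B}' \subseteq \mathcal{B}$. For the specialization order on $\MacP^+(k,n)$, after fixing representatives of $\mathcal{M}$ and $\mathcal{M}'$ with all chirotope values in $\{0, 1\}$, the condition $\mathcal{M} \leadsto \mathcal{M}'$ reduces to: every basis of $\underline{\mathcal{M}'}$ is also a basis of $\underline{\mathcal{M}}$, that is, $\mathcal{B}(\underline{\mathcal{M}'}) \subseteq \mathcal{B}(\underline{\mathcal{M}})$. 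Both partial orders therefore coincide with containment of base sets, so $\phi$ is an isomorphism of posets. The main obstacle is the bookkeeping around sign conventions in the definition of an oriented matroid ``up to sign'': one must verify that each positively oriented matroid has an unambiguous canonical representative with non-negative chirotope, so that specialization can be compared pointwise, and that $\phi$ is a genuine bijection rather than a many-to-one correspondence.
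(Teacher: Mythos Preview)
Your proof is correct and follows essentially the same approach as the paper: both use Theorem~\ref{thm:main} to identify positively oriented matroids with positroids via their base sets, and then invoke Theorem~\ref{posetclosures} to see that specialization and cell-closure both amount to containment of base collections. Your version is simply more explicit---constructing $\phi$ and $\psi$ and checking order preservation in each direction---and your flagged ``main obstacle'' about canonical non-negative chirotope representatives is exactly the bookkeeping the paper glosses over; the only minor omission is that both posets carry an adjoined $\hat{0}$, which your bijection should also match.
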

\begin{proof}
By Theorem \ref{thm:main}, every positively oriented matroid is a
positroid.  Therefore each positively oriented matroid is 
realizable by a totally nonnegative matrix.  It follows from
the definitions that positively oriented matroids in 
$\MacP^+(k,n)$ are in bijection with the cells of 
$\Gr^+(k,n)$.  Moreover, by Theorem \ref{posetclosures},
the order relation (specialization)
in $\MacP^+(k,n)$ precisely corresponds to the order relation
on closures of cells in $\Gr^+(k,n)$.  
\end{proof}

Theorem \ref{thm1} now follows directly from Proposition 
\ref{equalposets}
and Theorem \ref{thm:shelling}, while
Theorem \ref{thm2} follows from Proposition
\ref{equalposets}
and Theorem \ref{thm:RW}.

\bibliographystyle{amsalpha}
\bibliography{bibliography}
\label{sec:biblio}

\end{document}